\newtheorem{thm}{Theorem}[section]
\newtheorem{prop}[thm]{Proposition}
\newtheorem{lem}[thm]{Lemma}
\newtheorem{cor}[thm]{Corollary}
\theoremstyle{definition}
\theoremstyle{remark}
\newtheorem{rem}[thm]{Remark}
\newtheorem*{nota}{Notation}
\newcommand{\Zz}{\mathbb {Z}}
\newcommand{\Ff}{\mathbb {F}}
\newcommand{\Cc}{\mathbb {C}}
\newcommand{\Qq}{\mathbb {Q}}
\newcommand{\im}{\mathrm{Im\,}}
\newcommand{\Hom}{\mathrm{Hom}}
\newcommand{\xr}{\xrightarrow}
\numberwithin{equation}{section}
\begin{document}
	\title[A formula for the mod $p$ cohomology of $BPU(p)$]{A formula for the mod $p$ cohomology of $BPU(p)$}
	\author[F.~Fan]{Feifei Fan}
	\thanks{The author is supported by the National Natural Science Foundation of China (Grant No. 12271183) and by the GuangDong Basic and Applied Basic Research Foundation (Grant No. 2023A1515012217).}
	\address{Feifei Fan, School of Mathematical Sciences, South China Normal University, Guangzhou, 510631, China.}
	\email{fanfeifei@mail.nankai.edu.cn}
	\subjclass[2020]{55R35, 55R40}
	\keywords{classifying spaces, projective unitary groups}

	\begin{abstract}
		We study the mod $p$ cohomology ring of the classifying space $BPU(p)$ of the projective unitary group $PU(p)$, when $p$ is an odd prime. We prove a mod $p$ formula analogous to a formula of Vistoli for the integral cohomology ring of $BPU(p)$. As an application, we give a simple topological proof of Vistoli's formula. 
	\end{abstract}
	
		\maketitle
		
	\section{Introduction}\label{sec:introduction}
	Let $BG$ be the classifying space of a topological group $G$. For a group homomorphism $\pi:G\to G'$, $B\pi:BG\to BG'$ denotes the corresponding map on classifying spaces. If $H$ is a subgroup of $G$, $i_H$ denotes the inclusion morphism $H\hookrightarrow G$. 
	
	For a compact connected Lie group $G$, let $T_G\subset G$ be a maximal torus of $G$. We denote by $W_G:=N_G(T_G)/T_G$ the Weyl group of $G$. When the group $G$ is clear from the context, we shall often omit the subscript $G$ from the notations. The image of the induced  homomorphism of cohomology rings with coefficient ring $R$
	\[Bi_T^*:H^*(BG;R)\to H^*(BT;R)\] 
    lies in $H^*(BT;R)^{W}$, the invariant subring under the conjugation $W$-action.

    By the classical work of Borel \cite{Borel53,Borel55},  $H^*(BG;\Qq)\cong H^*(BT;\Qq)^{W}$, which is a polynomial algebra generated by even dimensional elements. Similar identifications hold for cohomology with coefficients in fields of prime characteristic $p$ as
    soon as $H^*(G;\mathbb{Z})$ (or equivalently, $H^*(BG;\mathbb{Z})$) has no $p$-torsion. Except these simple cases, the cohomology of $BG$ may be quite complicated. (See for example \cite{KMS75,Tod87,Vis07,Fan24a}.)	
	
	In this paper we will focus on the mod $p$ ($p$ an odd prime) cohomology of $BG$ for the \emph{projective unitary group} $G=PU(p)=U(p)/S^1$, where $S^1\subset U(p)$ is the center. We give a formula for the mod $p$ cohomology ring of $BPU(p)$, which is analogous to a formula of Vistoli (Theorem \ref{thm:vistoli}) for the integral cohomology of $BPU(p)$. Vistoli's proof of Theorem \ref{thm:vistoli} involves many techniques of algebraic geometry, some of which were originally developed by Vezzosi \cite{Vezz00}. By contrast, our proof is topological. As an application, we will give a simple proof of Vistoli's formula.
	
	The mod $p$ (or integral) cohomology of $BPU(n)$ with $p|n$ has broad relevance in algebraic topology and algebraic geometry. It is the main tool to classify \emph{topological Azumaya algebras} over a topological space $X$, which was originally defined by Grothendieck \cite{Gro66}. It is also an important input for studying the \emph{topological period-index problem} introduced by Antieau-Williams \cite{AW14a,AW14b}, and further studied by Gu \cite{Gu19,Gu20}. Recently, Chen and Gu \cite{CG24} have found another interesting application of the cohomology of $BPU(n)$ to the \emph{topological complexity problem} in enumerative algebraic geometry. Since the cohomology of $BPU(p)$ plays a key role in the computation of the cohomology of $BPU(n)$ with $p|n$, as shown in works such as \cite{Gu21,Fan24b}, the result of this paper can be used to study the mod $p$ cohomology ring of $BPU(n)$ for general $n$.
	
	The strategy of our proof will be to determine the image of the mod $p$ cohomology ring of $BPU(p)$ under the following ring homomorphism induced by restrictions to subgroups of $PU(p)$:
	\[H^*(BPU(p);\Ff_p)\to H^*(BT_{PU(p)};\Ff_p)\times H^*(B\Gamma;\Ff_p),\]
	where $\Gamma$ is the unique (up to conjugation) maximal nontoral elementary abelian $p$-subgroup of $PU(p)$. 
	This method was also used by Kono-Yagita \cite{KY93} to determine the Steenrod algebra structure of the mod $3$ cohomology ring of $BPU(3)$, and later by Kameko-Yagita \cite{KY08} to determine the additive structure of the mod $p$ cohomology of $BPU(p)$ for all odd primes $p$. The Brown-Peterson cohomology of $BPU(p)$ was also studied in these works based on the ordinary cohomology results for $BPU(p)$. 
	
	To introduce our result, we need a few notations. Recall that the cohomology ring of $BU(n)$ with arbitrary coefficients is
	\[H^*(BU(n);R)=R[c_1,\dots,c_n],\ \deg(c_i)=2i.\]
	Here $c_i$ is the $i$th universal Chern class of the classifying space $BU(n)$ for $n$-dimensional complex bundles. It is known that 
	\begin{gather*}
		Bi_T^*:H^*(BU(n);R)\cong H^*(BT;R)^W=R[\sigma_1,\dots,\sigma_n],\\
		Bi_T^*(c_i)=\sigma_i\in H^*(BT;R)\cong R[t_1,\dots,t_n],
	\end{gather*} 
		where $\sigma_i$ is the $i$th elementary symmetric polynomial in the variables $t_1,\dots,t_n$. 
		
		Let $\nabla_n=\sum_{i=1}^n\partial/\partial t_i$ be the linear differential operator acting on $H^*(BT_{U(n)};R)$. Then it is easy to see that $\nabla_n$ preserves symmetric polynomials, so that there is a restriction action of $\nabla_n$ on the subring $H^*(BU(n);R)\subset H^*(BT_{U(n)};R)$. For elementary symmetric polynomials, one checks that 
\begin{equation}\label{eq:nabla}
	\nabla_n(\sigma_k)=(n-k+1)\sigma_{k-1}.	
\end{equation}
	Let $K_n$ be the kernel of the restriction of $\nabla_n$ to $H^*(BU(n);\Zz)$, which is naturally isomorphic to the ring of invariants $H^*(BT_{PU(n)};\Zz)^W$ (see Theorem \ref{thm:C-G}).

	Let $T^p$ be the standard maximal torus in $U(p)$ consisting of diagonal
	matrices. Consider the embedding 
	\[\Ff_p\hookrightarrow T^p,\ \ \omega\mapsto (\omega,\omega^2,\dots,\omega^{p-1},1),\ \omega=e^{2\pi i/p}.\]
	This map induces a homomorphism  
	\[\Theta_p:H^*(BT^p;\Zz)\cong \Zz[t_1,\dots,t_p]\to\Zz[\eta]/(p\eta)\cong H^*(B\Ff_p;\Zz),\ t_i\mapsto i\eta.\] 
	Let $I_p$ be the kernel of the restriction of $\Theta_p$ to $K_p\subset H^*(BT^p;\Zz)$. 
	For the symmetric polynomial $\delta=\prod_{i\neq j}(t_i-t_j)\in K_p$, one easily checks that 
	\begin{equation}\label{eq:delta}
	\Theta_p(\delta)\equiv -\eta^{p^2-p}\neq 0\mod (p\eta).
    \end{equation}
	
	Vistoli proved the following formula for the integral cohomology ring of $BPU(p)$.
	\begin{thm}[Vistoli {\cite[Theorem 3.4]{Vis07}}]\label{thm:vistoli}
		For any prime $p>2$, the integral cohomology ring of $BPU(p)$ is given by
		\[ H^*(BPU(p);\Zz)\cong\frac{K_p\otimes \Zz[x_{2p+2}]\otimes\Lambda[x_3]}{\langle px_{2p+2},\,px_3,\,I_px_{2p+2},\,I_px_3\rangle},\]
		where subscripts of generators denote degree.
	\end{thm}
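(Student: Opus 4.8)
The plan is to deduce Theorem~\ref{thm:vistoli} from our mod $p$ computation of $H^*(BPU(p);\Ff_p)$. Two preliminary reductions are cheap. First, $H^*(BPU(p);\Zz)$ has only $p$-torsion: the central extension $1\to\Zz/p\to SU(p)\to PU(p)\to1$ yields a fibration $BSU(p)\to BPU(p)\to K(\Zz/p,2)$, so $BPU(p)_{(\ell)}\simeq BSU(p)_{(\ell)}$ for every prime $\ell\neq p$, and $H^*(BSU(p);\Zz)$ is torsion-free; hence it suffices to work $p$-locally. Second, by Borel, $Bi_T^*$ is injective modulo torsion and lands in $H^*(BT_{PU(p)};\Zz)^{W}\cong K_p$ (Theorem~\ref{thm:C-G}) with rationally surjective image, so the torsion-free quotient of $H^*(BPU(p);\Zz)$ is a full-rank subring $A\subseteq K_p$; that $A=K_p$ will drop out below.

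The main point is to control the $p$-torsion through the integral Bockstein spectral sequence. From the mod $p$ formula and the action of the Bockstein $\beta$ on $H^*(BPU(p);\Ff_p)$ (which we pin down together with the other Steenrod operations) I would compute the Bockstein homology $H\bigl(H^*(BPU(p);\Ff_p),\beta\bigr)$ and compare its Poincaré series with that of $H^*(BPU(p);\Qq)\cong K_p\otimes\Qq$; the two agree, so the $E_2$-page of the integral Bockstein spectral sequence equals its $E_\infty$-page. Consequently every torsion class in $H^*(BPU(p);\Zz)$ has order exactly $p$, and the universal coefficient theorem gives $\dim_{\Ff_p}H^n(BPU(p);\Ff_p)=\operatorname{rank}A^{\,n}+t^n+t^{n+1}$, where $t^n$ is the $\Ff_p$-dimension of the torsion subgroup of $H^n(BPU(p);\Zz)$; this determines all the $t^n$.

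It remains to describe the torsion ideal $\TT\subset H^*(BPU(p);\Zz)$, together with the quotient ring $H^*(BPU(p);\Zz)/\TT$, via the restriction homomorphisms to $BT_{PU(p)}$ and to the maximal nontoral elementary abelian $p$-subgroup $\Gamma$. Using the description of $\operatorname{im}\bigl(H^*(BPU(p);\Ff_p)\to H^*(B\Gamma;\Ff_p)\bigr)$ and $\operatorname{im}\bigl(H^*(BPU(p);\Ff_p)\to H^*(BT_{PU(p)};\Ff_p)\bigr)$ that underlies the mod $p$ theorem, one shows: the reduction of $Bi_T^*$ hits all of $K_p\otimes\Ff_p$, so $A+pK_p=K_p$ and hence $A=K_p$ by a graded Nakayama argument; $\TT$ is detected on $B\Gamma$; $H^3(BPU(p);\Zz)\cong\Zz/p$ is generated by the extension class $x_3$ of $1\to S^1\to U(p)\to PU(p)\to1$, whose annihilator modulo $p$ is exactly $I_p$; and there is a further torsion generator $x_{2p+2}$ in degree $2p+2$, again annihilated exactly by $I_p$, so that $\TT$ is a free $K_p/(p,I_p)$-module on the monomials $x_3^{\varepsilon}x_{2p+2}^{k}$ with $\varepsilon\in\{0,1\}$, $k\ge0$, not both zero (the relation $\Theta_p(\delta)\equiv-\eta^{p^2-p}\ne0$ from \eqref{eq:delta} confirms $\delta\notin I_p$, so $\delta x_3\ne0$). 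Granting this, define $\Phi$ from the ring on the right-hand side of Theorem~\ref{thm:vistoli} to $H^*(BPU(p);\Zz)$ by sending $K_p$ into $H^*(BPU(p);\Zz)$ through a ring-theoretic section of the projection $H^*(BPU(p);\Zz)\twoheadrightarrow K_p$ (built from lifts of the ring generators of $K_p$ along $Bi_T^*$), and sending $x_3,x_{2p+2}$ to the classes above. The relations $px_3=px_{2p+2}=0$ hold because the torsion has exponent $p$, the relations $I_px_3=I_px_{2p+2}=0$ by the annihilator computation, and $x_3^2=0$ since $\deg x_3$ is odd and $p$ is odd; so $\Phi$ is well defined. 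It is surjective because its image contains $K_p$ and all of $\TT$, and injective by a Poincaré-series comparison using the data above; hence it is an isomorphism.

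I expect the last step to be the main obstacle, and within it the assertion that the annihilators of $x_3$ and $x_{2p+2}$ are precisely $I_p\pmod p$ and that $\TT$ is free over $K_p/(p,I_p)$: this is exactly where the module structure of $H^*(B\Gamma;\Ff_p)$ over the image of $K_p$ is used, and it reuses the combinatorial core of the mod $p$ theorem. A secondary technical point is producing the ring-theoretic section of $H^*(BPU(p);\Zz)\to K_p$ needed to make $\Phi$ multiplicative; once these are in hand, the remainder is bookkeeping with Poincaré series, and, as a by-product, one recovers the additive structure of $H^*(BPU(p);\Zz)$ obtained by Kameko--Yagita \cite{KY08}.
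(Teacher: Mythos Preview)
Your outline is plausible but takes a substantially longer detour than the paper. You propose to feed Theorem~\ref{thm:main} into the Bockstein spectral sequence, compare Poincar\'e series, and then assemble $H^*(BPU(p);\Zz)$ from the torsion-free quotient and a description of the torsion ideal as a $K_p/(p,I_p)$-module. The paper does none of this: it simply reruns the \emph{same} argument that proved Theorem~\ref{thm:main}, but integrally. Concretely, Corollary~\ref{cor:integral injection} (an easy consequence of Corollary~\ref{cor:V-V} together with Theorem~\ref{thm:Vezzosi}) gives an injection $H^*(BPU(p);\Zz)\hookrightarrow H^*(BU(p);\Zz)\times H^*(B\Gamma;\Zz)$, and Proposition~\ref{prop:integral image} identifies the image as the subring $R_0$ generated by $(I_p,0)$, $(0,s)$, $(0,f)$, $(\delta,-h)$; one then writes down the obvious surjection from $K_p\otimes\Zz[x_{2p+2}]\otimes\Lambda[x_3]$ onto $R_0$ and checks that its kernel is exactly $\langle px_3,\,px_{2p+2},\,I_px_3,\,I_px_{2p+2}\rangle$, parallel to the proof of Theorem~\ref{thm:main}. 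No Bockstein computation, no dimension count, no graded Nakayama is needed.

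What the paper's route buys is that the two ``hard'' ingredients---injectivity of the product restriction and identification of the image---are already in hand from the mod~$p$ argument and transfer to $\Zz$ with almost no extra work. What your route buys is that, if one only had the mod~$p$ ring as a black box (without the underlying detection lemmas), Bockstein methods would still recover the integral result; but since those lemmas are available, this is unnecessary. The technical worry you flag---building a multiplicative section $K_p\to H^*(BPU(p);\Zz)$---is a non-issue in the paper's framework: the map $K_p\to R_0$, $u\mapsto(u,\psi(u))$, is a ring map because the second coordinate factors as $K_p\xrightarrow{\Theta_p}\Zz[\eta^{p^2-p}]/(p\eta^{p^2-p})\cong\Zz[h]/(ph)\subset H^*(B\Gamma)^{SL_2(\Ff_p)}$, using Corollary~\ref{cor:vistoli image} and the fact that $h$ restricts to $\eta^{p^2-p}$ on $B\Ff_p\{\tau\}$ while $s,f$ restrict to zero. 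This simultaneously gives the section and identifies the annihilator of the torsion generators as $I_p$, without any module-structure analysis.
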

	\begin{rem}
		The additive structure of $K_n$ is easily obtained from the isomorphism 
		\[K_n\otimes\Qq\cong H^*(BPU(n);\Qq)\cong H^*(BSU(n);\Qq)\cong \Qq[c_2,\dots,c_n].\]
		Furthermore, it is shown in \cite[Proposition 3.1]{Vis07} (see Corollary \ref{cor:vistoli image}) that the image of $\Theta|_{K_p}$ is generated by $\Theta(\delta)$. Hence, the $\Zz$-module structure of $H^*(BPU(p);\Zz)$ is known.
		However, the ring structure of $K_n$ is very complicated, and is unknown when $n\geq 5$ (see \cite{Vezz00} for $n=3$, and \cite{Fan24a} for $n=4$). In \cite[Theorem 1.2]{FZZZ24}, the author and collaborators proved that the first relation appears in $K_n^{12}$.
	\end{rem}

 The following theorem for the mod $p$ cohomology ring of $BPU(p)$ is the main result of this paper. Note that the additive structure of $H^*(BU(p);\Ff_p)$ was determined by Kameko and Yagita \cite[Theorem 1.7]{KY08}.
	\begin{thm}\label{thm:main}
		Let $L_p$ be the subring  of $H^*(BU(p);\Ff_p)$ generated by $K_p$ (mod $p$) and $c_1$, and let $Q_p$ be the tensor algebra $\Ff_p[x_{2p+2}]\otimes\Lambda_{\Ff_p}[x_3,x_{2p+1}]$, subscripts indicating degree, $A_p$ be the augmentation ideal of $Q_p$. Then
		\[ H^*(BPU(p);\Ff_p)\cong \frac{L_p\otimes Q_p}{\langle I_p A_p,\,c_1x_3,\,c_1x_{2p+1},\,c_1x_{2p+2}+x_3x_{2p+1}\rangle}.\]
		Here $I_p$ denotes its mod $p$ reduction.
	\end{thm}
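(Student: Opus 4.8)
The plan is to compute the image of the restriction map
\[
\rho\colon H^*(BPU(p);\Ff_p)\to H^*(BT_{PU(p)};\Ff_p)\times H^*(B\Gamma;\Ff_p)
\]
and to show that $\rho$ is injective, so that $H^*(BPU(p);\Ff_p)$ is identified with this image. First I would recall (from the Kameko--Yagita additive computation, quoted as \cite{KY08}) the additive structure of $H^*(BPU(p);\Ff_p)$, together with the fact that the kernel of $\rho$ is trivial; the detection of all classes on the maximal torus together with $\Gamma$ is exactly what makes the ``restriction to subgroups'' strategy work, and this is the technical backbone inherited from \cite{KY93,KY08}. The map $BPU(p)\to BU(p)$ induced by the adjoint representation $PU(p)\to U(p^2-1)$, or more efficiently the fibration $BS^1\to BU(p)\to BPU(p)$, provides the classes: $L_p$ is the image of $H^*(BU(p);\Ff_p)$ under $\rho$ on the torus factor, while the exotic classes $x_3$, $x_{2p+1}$, $x_{2p+2}$ arise from the Serre spectral sequence of $S^1\to ES^1\times_{S^1}EPU(p)\to BPU(p)$ (equivalently $BS^1\to BU(p)\to BPU(p)$): $x_3$ is the transgression of the fundamental class of $S^1$ twisted into $H^3$, and $x_{2p+1}$, $x_{2p+2}$ are produced by applying the Steenrod operation $P^1$ and the Bockstein, exactly as in the classical analysis of $BPU(p)$.

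Next I would compute the two restrictions explicitly. On the toral factor, $\rho$ sends $c_1$ to $\sigma_1$, sends $K_p$ (mod $p$) into $H^*(BT_{PU(p)};\Ff_p)^W$ by Theorem \ref{thm:C-G}, and kills $x_3,x_{2p+1},x_{2p+2}$ since these are nilpotent of odd or Bockstein type and die on the torus; hence the toral component of the image is precisely $L_p$. On the $\Gamma$-factor one uses the structure of $H^*(B\Gamma;\Ff_p)$ for $\Gamma\cong(\Ff_p)^2$, namely $\Ff_p[y_1,y_2]\otimes\Lambda[z_1,z_2]$ with $\beta z_i=y_i$; here $x_3\mapsto z_1z_2$ (up to unit), $x_{2p+1}$ and $x_{2p+2}$ map to the obvious $P^1$- and Bockstein-images, the class $c_1$ restricts to $0$ on $\Gamma$ (because $\Gamma\subset SU(p)/\text{center}$, so the determinant is trivial), and $\delta\in K_p$ restricts to a nonzero power of the Euler-type class by \eqref{eq:delta} and Corollary \ref{cor:vistoli image}, which controls how $I_p$ acts. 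Combining, the image of $\rho$ is the set of pairs $(f,g)\in L_p\times (\text{subalgebra of }H^*(B\Gamma))$ that agree after further restriction to $H^*(BT_\Gamma;\Ff_p)$, where $T_\Gamma$ is the common torus coming from $\Ff_p\hookrightarrow T^p$; the congruence relations $I_pA_p=0$, $c_1x_3=c_1x_{2p+1}=0$, and $c_1x_{2p+2}+x_3x_{2p+1}=0$ are exactly the gluing conditions, the last one being the most delicate since it mixes the polynomial generator $x_{2p+2}$ with the product of the two exterior generators and must be checked by a direct computation of $P^1$ and the Bockstein on $H^*(B\Gamma;\Ff_p)$ (or equivalently on the fibre of $BU(p)\to BPU(p)$).

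Finally I would assemble the ring presentation. Define a ring map from $L_p\otimes Q_p$ to $H^*(BPU(p);\Ff_p)$ by sending $K_p\mapsto$ its Vistoli classes (reduced mod $p$), $c_1\mapsto$ the first Chern class of the standard $S^1$-extension, and $x_3,x_{2p+1},x_{2p+2}$ to the exotic classes above; check it kills the ideal $\langle I_pA_p,\,c_1x_3,\,c_1x_{2p+1},\,c_1x_{2p+2}+x_3x_{2p+1}\rangle$ by the restriction computation (a class lies in the kernel of $\rho$ iff it maps to zero on both factors, and the relations are precisely forced there). Then compare Poincar\'e series: the additive structure of the right-hand side is computable from the additive structure of $L_p$ (which follows from Kameko--Yagita's description of $H^*(BU(p);\Ff_p)$ together with the additive structure of $K_p$) tensored with $Q_p$ and cut down by the stated relations, and it matches the Kameko--Yagita additive answer for $H^*(BPU(p);\Ff_p)$ degree by degree; since the map is surjective (the generators hit a generating set) and source and target have equal finite dimension in each degree, it is an isomorphism. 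The main obstacle I anticipate is the bookkeeping in the last relation $c_1x_{2p+2}+x_3x_{2p+1}=0$: one must pin down the precise normalisation of $x_{2p+1}$ and $x_{2p+2}$ (there is a choice of scalar and of additive shift by decomposables) so that this identity holds on the nose, and verify it is compatible simultaneously on the torus and on $\Gamma$; a secondary difficulty is proving that $L_p\otimes Q_p$ modulo the listed relations has no extra elements, i.e.\ that these relations generate the whole relation ideal, which is where the injectivity of $\rho$ and the dimension count are essential.
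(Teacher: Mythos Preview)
Your overall strategy---detection on the maximal torus and on the nontoral elementary abelian $p$-subgroup $\Gamma$, followed by identification of the image---is exactly the paper's approach, and the injectivity of the restriction (which the paper takes from Vavpeti\v c--Viruel rather than Kameko--Yagita) is indeed the key structural input. However, your explicit restrictions to $\Gamma$ are wrong in a way that would derail the verification of the relations. You write $x_3\mapsto z_1z_2$, but $z_1z_2$ has degree $2$; it is the degree-$2$ class $x_2$ (the one with $B\pi^*(x_2)=c_1$) that restricts to $z_1z_2$, while $x_3=\beta(x_2)$ restricts to $\beta(z_1z_2)=y_1z_2-y_2z_1$. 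Correspondingly, your claim that ``$c_1$ restricts to $0$ on $\Gamma$'' is false: the class in $H^2(BPU(p);\Ff_p)$ hitting $c_1$ restricts to the nonzero element $z_1z_2$. Your determinant argument only shows that the further pullback to $B\tilde\Gamma$ (the extraspecial preimage of $\Gamma$ in $U(p)$) vanishes, which is consistent since $z_1z_2$ is the extension class and dies in $H^*(B\tilde\Gamma;\Ff_p)$, but it does not force $Bi_\Gamma^*(x_2)=0$. With your assignments the relation $c_1x_{2p+2}+x_3x_{2p+1}=0$ cannot be checked: on $\Gamma$ this relation is precisely M\`ui's identity $yf+sz=0$ in $H^*(B\Gamma;\Ff_p)^{SL_2(\Ff_p)}$ (in the paper's notation $y=ab$, $s=\beta(y)$, $z=P^1(s)$, $f=\beta(z)$), and both summands are individually nonzero.

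A secondary difference: the paper does not finish by a Poincar\'e-series comparison with the Kameko--Yagita additive answer. It computes the image of $B\pi^*\times Bi_\Gamma^*$ directly as the subring $R\subset H^*(BU(p);\Ff_p)\times H^*(B\Gamma;\Ff_p)$ generated by $(\rho(I_p),0)$ together with $(0,s)$, $(0,z)$, $(0,f)$, $(c_1,y)$, $(\delta,-h)$, then defines a surjection $\Phi\colon L_p\otimes Q_p\to R$ and proves $\ker\Phi$ equals the stated ideal by projecting onto each factor and invoking M\`ui's presentation of $H^*(B\Gamma;\Ff_p)^{SL_2(\Ff_p)}$ together with the identification $B\pi^*(\ker Bi_\Gamma^*)=\rho(I_p)$. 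Your dimension-count route could be made to work once the restriction computations are corrected, but it imports a heavier external input and obscures where the relations actually come from.
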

	\begin{rem}
	$K_3$ was computed by Vezzosi in \cite{Vezz00} (see also \cite[Theorem 14.2]{Vis07}). It turns out that $K_3$ is generated by $\gamma_2=3c_2-c_1^2$, $\gamma_3=27c_3-9c_1c_2+2c_1^3$ and 
	\[\delta=4c_1^3c_3-c_1^2c_2^2-18c_1c_2c_3+4c_2^3+27c_3^2.\]
	Therefore, $L_3$ is generated by $c_1$ and $\delta\equiv c_1^3c_3-c_1^2c_2^2+c_2^3$ (mod $3$). Combining this with \eqref{eq:delta} and Theorem \ref{thm:main}, we recover the result  of Kono, Mimura and Shimada \cite[Theorem 4.11]{KMS75} for $H^*(BPU(3);\Ff_3)$.
	\end{rem}
	\begin{nota}
	For the rest of this paper, we assume that $p$ is an odd prime. We use the simplified notation $H^*(-)$ to denote the integral cohomology $H^*(-;\Zz)$, and let $\rho:H^*(-)\to H^*(-;\Ff_p)$ be the mod $p$ reduction on cohomology.
	For a fixed odd prime $p$, let $P^k$ be the $k$th Steenrod power operation and let $\beta$ be the Bockstein in the mod $p$ Steenrod algebra. 
    \end{nota}

	\section{Classical results for $BPU(n)$}
	First, we introduce some general results on the cohomology of $BPU(n)$ for arbitrary $n$. 
	Let $\pi:U(n)\to PU(n)=U(n)/S^1$ be the quotient map defining $PU(n)$. Then there is a related fibration of classifying spaces 
	\begin{equation}\label{eq:fibration}
		BU(n)\xr{B\pi} BPU(n)\xr{\chi} BBS^1\simeq K(\Zz,3).
	\end{equation}
   Let $x\in H^*(K(\Zz,3))\cong \Zz$ be the fundamental class.
   It is known that $H^k(BPU(n))=0$ for $k=1,2$, and $H^3(BPU(n))\cong\Zz/n$ is generated by $\chi^*(x)$.
   
   Let $E_*^{*,*}$ be the cohomological Serre spectral sequence with integer coefficients associated to \eqref{eq:fibration}:
   \begin{equation}\label{eq:Serre}
   	E_2^{s,t}=H^s(K(\Zz,3);H^t(BU(n)))\Longrightarrow H^{s+t}(BPU(n)).
   \end{equation}
   Since $H^*(BU(n))$ is concentrated in even dimensions, we have $E_2=E_3$.
   In \cite{Gu21}, Gu studied some higher differentials in this spectral sequence. In particular, the differential $d_3$ can be described as follows:
   \begin{prop}[Gu, {\cite[Corollary 3.10]{Gu21}}]\label{prop:d_3}
   	For $f\in E_3^{0,2k}\cong H^{2k}(BU(n))$, $\xi\in E_3^{i,0}\cong H^i(K(\Zz,3))$, and $f\xi\in E_3^{i,2k}$, we have
   	\[
   	d_3(f\xi)=\nabla_n(f)x\xi\in E_3^{i+3,2k-2}.
   	\]
   \end{prop}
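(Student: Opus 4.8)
This is the first possibly-nonzero differential in the Serre spectral sequence \eqref{eq:Serre} of $BU(n)\xrightarrow{B\pi}BPU(n)\xrightarrow{\chi}K(\Zz,3)$, and the plan is to identify $d_3$ with the holonomy action of $\Omega K(\Zz,3)=K(\Zz,2)=BS^1$ on the fibre $BU(n)$, and then evaluate that action on Chern classes.

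\emph{Reduction to a derivation on the fibre.} Each $\xi\in E_3^{i,0}\cong H^i(K(\Zz,3))$ is pulled back from the base, hence a permanent cycle, so $d_3(f\xi)=d_3(f)\,\xi$ by the Leibniz rule and it suffices to take $\xi=1$. Since $H^*(BU(n))$ is torsion-free, $E_3^{3,2k-2}=H^3(K(\Zz,3))\otimes H^{2k-2}(BU(n))=x\cdot H^{2k-2}(BU(n))$, so $d_3(f)=x\,\delta(f)$ for a unique $\delta(f)\in H^{2k-2}(BU(n))$. As $d_3$ is a derivation of $E_3$ and $x^2=0$, the map $\delta$ is a derivation of $H^*(BU(n))=\Zz[c_1,\dots,c_n]$; since $\nabla_n$ is also a derivation, it remains only to check $\delta(c_i)=\nabla_n(c_i)=(n-i+1)c_{i-1}$, using \eqref{eq:nabla}.

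\emph{Identifying $\delta$ via the holonomy.} The central extension $1\to S^1\to U(n)\to PU(n)\to 1$ makes $BU(n)\to BPU(n)$ a principal $BS^1$-bundle classified by $\chi$; thus $BU(n)\to BPU(n)\to K(\Zz,3)$ is $\chi$ converted into a fibration, whose holonomy action of $\Omega K(\Zz,3)=BS^1$ on the fibre is the $BS^1$-action $\mu\colon BS^1\times BU(n)\to BU(n)$ induced by the scalar multiplication $S^1\times U(n)\to U(n)$. Pull the fibration back along the bottom cell $j\colon S^3\hookrightarrow K(\Zz,3)$; since $K(\Zz,3)$ has no cells in dimensions $1,2,4$, the map $j^*$ is an isomorphism on $H^3$, so by naturality it suffices to compute $\delta$ for the fibration over $S^3$. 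There the Wang sequence applies: the clutching function is $\psi:=\mu\circ(\tilde\jmath\times\operatorname{id})\colon S^2\times BU(n)\to BU(n)$, where $\tilde\jmath\colon S^2\to\Omega K(\Zz,3)=BS^1$ is adjoint to $j$ (the inclusion of the bottom cell), and for $\alpha\in H^*(BU(n))$ one has $\psi^*(\alpha)=1\otimes\alpha+u\otimes\delta(\alpha)$ in $H^*(S^2)\otimes H^*(BU(n))$, with $u\in H^2(S^2)$ corresponding to $x$ under the suspension isomorphism $H^3(S^3)\cong H^2(S^2)$.

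\emph{The Chern class computation.} The representation $(z,A)\mapsto zA$ of $S^1\times U(n)$ is the tensor product of the standard character of $S^1$ with the standard representation of $U(n)$, so $\mu$ classifies the external tensor product $L\boxtimes\gamma_n$, with $L$ the tautological line bundle on $BS^1$ and $\gamma_n$ the universal bundle on $BU(n)$. Hence, writing $s=c_1(L)$ and letting $y_1,\dots,y_n$ be the Chern roots of $\gamma_n$,
\[
\mu^*(c_i)=c_i(L\boxtimes\gamma_n)=\sigma_i(s+y_1,\dots,s+y_n)=\sum_{m=0}^{i}\binom{n-m}{i-m}s^{i-m}\sigma_m(y_1,\dots,y_n),
\]
whose $s$-degree $0$ and $s$-degree $1$ parts are $c_i$ and $(n-i+1)\,s\,c_{i-1}$. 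Pulling back along $\tilde\jmath\times\operatorname{id}$ annihilates all terms of $s$-degree $\ge 2$ (since $u^2=0$ in $H^*(S^2)$), so $\psi^*(c_i)=c_i+(n-i+1)\,u\,c_{i-1}$, i.e.\ $\delta(c_i)=(n-i+1)c_{i-1}=\nabla_n(c_i)$. (Equivalently, $\nabla_n=\sum_i\partial/\partial t_i$ is precisely the operation ``extract the coefficient of $s$ in $\alpha(s+t_1,\dots,s+t_n)$''.) Therefore $d_3(c_i)=\nabla_n(c_i)\,x$, and by the derivation property $d_3(f)=\nabla_n(f)\,x$ for all $f$, whence $d_3(f\xi)=\nabla_n(f)\,x\,\xi$ as claimed.

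\emph{Main point.} The only delicate step is the geometric input of the third paragraph — that $BU(n)\to BPU(n)$ is the principal $BS^1$-bundle classified by $\chi$ and that its structure action, hence the holonomy of $\chi$, is $\mu$ — together with the sign in the Wang sequence, which merely normalizes the generator $x$ consistently with \eqref{eq:nabla}; the rest is the standard Chern-class expansion of a line-bundle twist.
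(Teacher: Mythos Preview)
The paper does not give its own proof of this proposition: it is quoted from Gu \cite[Corollary~3.10]{Gu21}, with only the remark that the same argument works over arbitrary coefficient rings. There is therefore nothing in the paper itself to compare against.

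Your argument is correct and self-contained. The reduction to computing $d_3(c_i)$ via the Leibniz rule, the identification of the transgression with the holonomy action of $\Omega K(\Zz,3)=BS^1$ on the fibre after pulling back along $S^3\hookrightarrow K(\Zz,3)$, and the Chern-class expansion of the twist $L\boxtimes\gamma_n$ are all standard and carried out cleanly; this is in fact very much the same circle of ideas as in Gu's original proof, which also passes through the $BS^1$-action $\mu$ on $BU(n)$ classifying $L\boxtimes\gamma_n$. One minor inaccuracy: you assert $x^2=0$ in $H^6(K(\Zz,3);\Zz)$, but in fact $x^2$ is the nonzero element of $H^6(K(\Zz,3);\Zz)\cong\Zz/2$ (its mod~$2$ reduction is $\iota_3^2=Sq^3\iota_3\neq 0$). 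This does not affect your argument, since the derivation property of $\delta$ already follows from the fact that multiplication by $x$ gives an injection $E_3^{0,*}\hookrightarrow E_3^{3,*}$ on the torsion-free module $H^*(BU(n))$; the claim $x^2=0$ is not needed and should simply be deleted.
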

   In fact, Proposition \ref{prop:d_3} is also true for the corresponding spectral sequence with arbitrary coefficients, and this is easily seen from its proof in \cite{Gu21}.
	
	Here is an important result of Crowley and Gu about the images of the restriction maps $B\pi^*$ and $Bi^*_T$ on $H^*(BPU(n))$.
	\begin{thm}[Crowley-Gu, {\cite[Theorem 1.3]{CG21}}]\label{thm:C-G}
		For $H^*(BPU(n))$ and the spectral sequence \eqref{eq:Serre}, there are isomorphisms
		\[K_n=E_4^{0,*}=E_\infty^{0,*}=\im B\pi^*\cong \im Bi_T^*=H^*(BT)^W\cong H^*(BPU(n))/torsion.\]
	\end{thm}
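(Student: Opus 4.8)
The plan is to separate three formal identifications from the single substantive point, and then to attack the latter by comparison with the maximal torus.

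First I would establish the purely formal links. Since $E_2=E_3$ and the column $s=0$ receives no differentials, Proposition \ref{prop:d_3} (with $\xi=1$) identifies $d_3$ on $E_3^{0,2k}\cong H^{2k}(BU(n))$ with the map $f\mapsto\nabla_n(f)\,x$. As $H^3(K(\Zz,3))\cong\Zz\langle x\rangle$ and $H^*(BU(n))$ is free, multiplication by $x$ is injective on $E_3^{3,*}$, so $\ker(d_3|_{E_3^{0,*}})=\ker\nabla_n=K_n$, whence $E_4^{0,*}=K_n$. The identity $E_\infty^{0,*}=\im B\pi^*$ is the standard description of the edge homomorphism. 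For the algebraic identification $K_n\cong H^*(BT_{PU(n)})^{W}$, I would use that the restriction to tori $B(\pi|_T)^*\colon H^*(BT_{PU(n)})\to H^*(BT_{U(n)})=\Zz[t_1,\dots,t_n]$ is injective with image the translation-invariant polynomials, which are exactly $\ker\nabla_n$; since this map is $W$-equivariant, it carries $H^*(BT_{PU(n)})^{W}$ isomorphically onto $(\ker\nabla_n)^{W}=\ker\nabla_n\cap H^*(BU(n))=K_n$.

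Next I would isolate the crux. From the commutative square obtained by restricting $B\pi$ to maximal tori, together with the injectivity of $B(\pi|_T)^*$ and of $Bi_{T_{U(n)}}^*$ (the latter an isomorphism onto the symmetric polynomials), one checks that the following are equivalent: (i) $E_4^{0,*}=E_\infty^{0,*}$; (ii) $K_n\subseteq\im B\pi^*$; (iii) $\im Bi_{T}^*=H^*(BT)^{W}$ for $PU(n)$. Indeed, given $\bar\beta\in H^*(BT_{PU(n)})^{W}$, its image $B(\pi|_T)^*(\bar\beta)$ lies in $K_n$, and a lift of this element along $B\pi^*$ yields, by the square and injectivity of $B(\pi|_T)^*$, a lift of $\bar\beta$ along $Bi_T^*$, and conversely. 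Granting (ii), the inclusions $\im B\pi^*=E_\infty^{0,*}\subseteq E_4^{0,*}=K_n$ all become equalities. This statement, surjectivity onto the invariants, equivalently survival of the edge, is the heart of the theorem and the main obstacle; note that it is genuinely integral, since $E_4^{0,*}/E_\infty^{0,*}$ is a priori a nonzero torsion group built from the higher differentials.

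To attack it I would compare \eqref{eq:Serre} with the Serre spectral sequence $\bar E$ of the torus fibration $BT_{U(n)}\to BT_{PU(n)}\to K(\Zz,3)$. The top sequence degenerates: its edge homomorphism is $B(\pi|_T)^*$, injective with image $\ker\nabla_n=\bar E_4^{0,*}$, which forces $\bar E_4=\bar E_\infty$ concentrated in column $0$. The fibre inclusion induces a map $j^*\colon E_r\to\bar E_r$ which on $E_2$ is $\mathrm{id}\otimes Bi_{T_{U(n)}}^*$; since $\Zz[t_1,\dots,t_n]$ is a free module over the symmetric functions (a descent, or Schubert, basis), $Bi_{T_{U(n)}}^*$ is a split injection of free graded groups, so $j^*$ is injective on $E_2$. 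Naturality against the collapsed $\bar E$ then constrains the higher differentials leaving column $0$; promoting injectivity on $E_2$ to the actual vanishing of $d_r|_{E_r^{0,*}}$ for $r\ge 4$ is where the real work lies. Here the Becker--Gottlieb transfer shows $\coker Bi_T^*$ is annihilated by $|W|=n!$, so only the primes dividing $n!$ require attention, and at those primes I would extract the vanishing on $K_n$ from the explicit higher differentials computed by Gu \cite{Gu21}. Finally the torsion statement follows formally: rationally \eqref{eq:Serre} is concentrated in columns $s=0,3$, and as $\nabla_n$ is rationally surjective, column $3$ dies at $E_4$, so every $E_\infty^{s,*}$ with $s>0$ is torsion; hence $\ker B\pi^*$ is precisely the torsion subgroup of $H^*(BPU(n))$ (its quotient $\im B\pi^*=K_n$ being free), giving $H^*(BPU(n))/\mathrm{torsion}\cong K_n\cong H^*(BT)^{W}=\im Bi_T^*$.
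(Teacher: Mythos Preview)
The paper does not prove this theorem; it is quoted verbatim as a result of Crowley--Gu \cite[Theorem 1.3]{CG21} and used as a black box throughout. There is therefore no proof in the paper to compare your proposal against.

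On the substance of your sketch: the formal identifications ($E_4^{0,*}=K_n$, $E_\infty^{0,*}=\im B\pi^*$, $K_n\cong H^*(BT_{PU(n)})^W$) and the final torsion statement are handled correctly. You have also correctly isolated the crux, namely that the higher differentials $d_r$ for $r\ge 4$ vanish on the zero column. Your comparison with the torus fibration $\bar E$ is a natural idea, and your observation that $\bar E$ collapses because $B(\pi|_T)^*$ already hits all of $\ker\nabla_n$ is sound. However, you yourself flag the gap: split injectivity of $j^*$ on $E_2$ does not by itself propagate to injectivity on $E_r$ for $r\ge 4$, since passing to subquotients can destroy injectivity. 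Your fallback---invoking the explicit higher differentials from \cite{Gu21}---is essentially an appeal to the very source \cite{CG21} from which the theorem is taken, so at that point the argument is no longer self-contained. If you want an independent proof, you would need either a direct argument that $j^*$ remains injective on the relevant $E_r^{0,*}$ (for instance via a module splitting that is compatible with the differentials), or an alternative collapse argument; as written, the proposal is a correct outline that stops just short of the decisive step.
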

	
	The next result due to Vezzosi \cite[Corollary 2.4]{Vezz00} is about the ideal of torsion elements of $H^*(BPU(n))$. Vezzosi's original statement and proof are developed for Chow rings. Here we give a similar topological proof for cohomology.
	\begin{thm}\label{thm:Vezzosi}
		$H^*(BPU(n))$ has only $n$-torsion.
	\end{thm}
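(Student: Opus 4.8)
The plan is to pin down the torsion submodule exactly and then bound its exponent by exhibiting a convenient set of ideal generators. By Theorem~\ref{thm:C-G}, $H^*(BPU(n))/\mathrm{torsion}\cong E_\infty^{0,*}=K_n$, so the torsion submodule of $H^*(BPU(n))$ is precisely $F^1:=F^1H^*(BPU(n))$, the first step of the filtration of \eqref{eq:Serre}; and since $H^1(K(\Zz,3))=H^2(K(\Zz,3))=0$ we have $E_\infty^{1,*}=E_\infty^{2,*}=0$, hence $F^1=F^3$. As $F^3=\ker B\pi^*$ is an ideal, it is enough to produce a generating set of this ideal consisting of classes annihilated by $n$; then $n\cdot F^3=0$ follows at once.

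I would take as generating set the pulled-back classes $\chi^*(\xi)$, with $\xi$ ranging over homogeneous elements of $\widetilde H^*(K(\Zz,3))$. To justify that these generate $F^3$, recall that \eqref{eq:Serre} is multiplicative with $E_2^{s,t}=H^s(K(\Zz,3))\otimes H^t(BU(n))$; since $\nabla_n(1)=0$, every class on the edge line $E_2^{s,0}=H^s(K(\Zz,3))$ is a permanent cycle, and the resulting surjection $H^s(K(\Zz,3))\twoheadrightarrow E_\infty^{s,0}$ is induced by $\chi^*$. The filtration being finite in each degree, a downward induction on filtration reduces the generation statement to the assertion that for $s\geq 3$ the group $E_\infty^{s,*}$ is generated over $E_\infty^{0,*}=K_n$ by $E_\infty^{s,0}$. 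Using Proposition~\ref{prop:d_3} together with Gu's description of the higher differentials, this would come down to the algebraic fact that, after localizing at a prime $p\mid n$, the cokernel of $\nabla_n$ on $H^{>0}(BU(n))$ is generated over $K_n$ by its degree-zero summand $\Zz/n$; for $p$ odd the analysis is simplified by the vanishing $H^s(K(\Zz,3);\Ff_p)=0$ for $4\leq s\leq 2p$, which leaves only $d_3$ and a single family of higher differentials in play. I expect this cokernel statement to be the main obstacle.

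It then remains to check that each $\chi^*(\xi)$ is annihilated by $n$. If $\xi$ has degree $3$, it is a multiple of the fundamental class $x$ and $\chi^*(x)$ generates $H^3(BPU(n))\cong\Zz/n$, so $n\chi^*(\xi)=0$. If $\xi$ has degree $d\neq 3$, then $H^d(K(\Zz,3);\Qq)=0$, so $\xi$ is torsion; write $\xi=\sum_p\xi_p$ with $\xi_p$ its $p$-primary part. For $p\nmid n$ there is no $p$-torsion in $H^*(BPU(n))$: the fibration $B\mu_n\to BSU(n)\to BPU(n)$ arising from $1\to\mu_n\to SU(n)\to PU(n)\to1$ has $\Zz_{(p)}$-acyclic fibre, so $H^*(BPU(n);\Zz_{(p)})\cong H^*(BSU(n);\Zz_{(p)})$ is torsion free; hence $\chi^*(\xi_p)=0$. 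For $p\mid n$, the $p$-primary torsion of $H^*(K(\Zz,3))$ has exponent $p$ (a classical computation), so $\xi_p=\widetilde\beta(\bar\xi_p)$ with $\bar\xi_p\in H^{d-1}(K(\Zz,3);\Ff_p)$ and $\widetilde\beta$ the integral Bockstein; since $\chi^*$ commutes with $\widetilde\beta$, the class $\chi^*(\xi_p)=\widetilde\beta(\chi^*\bar\xi_p)$ lies in the image of the integral Bockstein on $H^*(BPU(n))$ and is therefore killed by $p$, hence by $n$. Summing over $p$ yields $n\cdot\chi^*(\xi)=0$, and combining with the previous paragraph gives $n\cdot F^3=0$, i.e.\ $H^*(BPU(n))$ has only $n$-torsion.
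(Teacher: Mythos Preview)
Your argument has a genuine gap at the second step, which you yourself flag: the claim that $F^3=\ker B\pi^*$ is generated as an ideal by the classes $\chi^*(\xi)$, equivalently that $E_\infty^{s,*}$ is generated over $K_n$ by $E_\infty^{s,0}$ for $s\geq 3$. This is not established, and your proposed reduction to a statement about $\operatorname{coker}\nabla_n$ presupposes control over \emph{all} higher differentials in the spectral sequence~\eqref{eq:Serre}. Gu's paper computes only some of these differentials, not a complete description, so the input you need is simply not available in the literature. (There is also a real risk of circularity: parts of the analysis of higher differentials in \cite{Gu21} and related work make use of Theorem~\ref{thm:Vezzosi} itself.) Your steps~1 and~3 are fine, and the reduction from the ideal-generation statement to the $E_\infty$ statement is correct, but without step~2 the argument does not close.

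The paper's proof avoids the spectral sequence entirely and is essentially one line. The quotient $U(n)\to U(n)/U(n-1)\cong S^{2n-1}$ descends to $PU(n)\to\Cc P^{n-1}$, giving a fiber bundle
\[
\Cc P^{n-1}\longrightarrow BU(n-1)\xrightarrow{\ \nu\ } BPU(n).
\]
The Becker--Gottlieb transfer $\tau:H^*(BU(n-1))\to H^*(BPU(n))$ satisfies $\tau\circ\nu^*=\chi(\Cc P^{n-1})\cdot\mathrm{id}=n\cdot\mathrm{id}$. Since $H^*(BU(n-1))$ is torsion free, $\nu^*$ annihilates every torsion class $\alpha$, hence $n\alpha=\tau\nu^*(\alpha)=0$. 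This bypasses all questions about differentials, ideal generation, and the structure of $H^*(K(\Zz,3))$.
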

	\begin{proof}
		The fiber bundle $U(n-1)\to U(n)\to S^{2n-1}$ induces a fiber bundle 
		\[U(n-1)\to PU(n)\to \Cc P^{n-1},\]
		which induces another fiber bundle 
		\[\Cc P^{n-1}\to BU(n-1)\xr{\nu} BPU(n).\]
		Let $\tau: H^*(BU(n-1))\to H^*(BPU(n))$ be the transfer map defined in \cite{BG75}. Then $\tau\nu^*=$ multiplication by $n=\chi(\Cc P^{n-1})$ by \cite[Theorem 5.5]{BG75}. 
		Since $H^*(BU(n-1))$ is torsion free, $\nu^*$ sends any torsion element $\alpha\in H^*(BPU(n))$ to zero, hence $\tau\nu^*(\alpha)=n\alpha=0$.
	\end{proof}
	
	Now we focus on $PU(p)$ and its elementary abelian $p$-subgroups. Recall that for an elementary abelian $p$-subgroup $E$ of a Lie group $G$, the \emph{Weyl group $W_G(E)$} is defined to be the quotient group $N_G(E)/C_G(E)$ of the normalizer $N_G(E)$ of $E$ by its centralizer $C_G(E)$. Clearly, the map $Bi_E^*:H^*(BG;\Ff_p)\to H^*(BE;\Ff_p)$ has image in the ring of invariants $H^*(BE;\Ff_p)^{W_G(E)}$.

	Let $\omega=e^{2\pi i/p}$, and let $\tilde\sigma,\tilde\tau\in U(p)$ be
	\[\tilde\sigma=\begin{pmatrix}
		0& 1\\
		I_{p-1} &0
	\end{pmatrix},\quad
	\tilde\tau=(\omega,\omega^2,\dots,\omega^{p-1},1)\in T^p.\]
	Let $\tilde\Gamma$ be the subgroup of $U(p)$ generated by $\tilde\sigma$ and $\tilde\tau$, and let $\sigma$, $\tau$, $\Gamma$ denote the corresponding images in the quotient group $PU(p)$. One easily checks that $\tilde\tau\tilde\sigma=\omega\tilde\sigma\tilde\tau$, so $\sigma$ and $\tau$ commute in $PU(p)$, i.e. $\Gamma\cong \Ff^2_p$. 
	Thus 
	\[H^*(B\Gamma;\Ff_p)\cong \Ff_p[\xi,\eta]\otimes\Lambda_{\Ff_p}[a,b],\] 
	where $a,b\in H^1(B\Gamma;\Ff_p)\cong \Hom(\Gamma,\Ff_p)$ are defined by $a(\sigma)=b(\tau)=1$, $a(\tau)=b(\sigma)=0$, and $\xi=\beta(a)$, $\eta=\beta(b)$. The integral cohomology of $B\Gamma$ is 
	\[H^*(B\Gamma)\cong\Zz[\xi,\eta,s]/(p\xi,p\eta,ps,s^2)\]
	with $\rho(\xi)=\xi$, $\rho(\eta)=\eta$, $\rho(s)=\xi b-\eta a$.
	
	By a result of Andersen et al. \cite[Theorem 8.5]{AGMV08}, we have $W_{PU(p)}(\Gamma)=SL_2(\Ff_p)$. Let $y=ab\in H^*(B\Gamma;\Ff_p)$.
	Since $SL_2(\Ff_p)$ acts naturally on $\Ff_p\{a,b\}$, it is easy to check that $y\in H^*(B\Gamma;\Ff_p)^{SL_2(\Ff_p)}$. Since the Bockstein homomorphism and Steenrod powers are $SL_2(\Ff_p)$-equivariant, $s:=\beta(y)=\xi b-\eta a$, $z:=P^1(s)=\xi^pb-\eta^pa$ and $f:=\beta(z)=\xi^p\eta-\eta^p\xi$ lie in $H^*(B\Gamma;\Ff_p)^{SL_2(\Ff_p)}$. Moreover, let 
	\[h:=\xi^{p^2-p}+\eta^{p-1}(\xi^{p-1}-\eta^{p-1})^{p-1}.\]
	Then by the result of Dickson  \cite{Dickson11} for the invariants of the action of $SL_n(\Ff_p)$ on $\Ff_p[n]$ (here we use it for the special case $n=2$), we have
	\begin{equation}\label{eq:Dickson}
		\Ff_p[\xi,\eta]^{SL_2(\Ff_p)}=\Ff_p[f,h]\subset H^*(B\Gamma;\Ff_p)^{SL_2(\Ff_p)}.
	\end{equation}

	The following proposition is the special case $n=2$ of M\`ui's theorem \cite{Mui75} (see also \cite[Theorem 1.1]{KM07}) for the ring of $SL_n(\Ff_p)$-invariants of the tensor product algebra $\Ff_p[n]\otimes\Lambda_{\Ff_p}[n]$. 
	\begin{prop}\label{prop:mod p invariants}
		$H^*(B\Gamma;\Ff_p)^{SL_2(\Ff_p)}$ is the subring of $H^*(B\Gamma;\Ff_p)$ generated by $f,h,s,y,z$, which is isomorphic to the following quotient ring
		\[\frac{\Ff_p[f,h]\otimes\Lambda_{\Ff_p}[s,y,z]}{(ys,\,yz,\,fy+sz)}.\]
	\end{prop}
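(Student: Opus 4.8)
The plan is to determine $H^{*}(B\Gamma;\Ff_p)^{SL_2(\Ff_p)}$ by splitting $H^{*}(B\Gamma;\Ff_p)=\Ff_p[\xi,\eta]\otimes\Lambda_{\Ff_p}[a,b]$ according to exterior degree in $a,b$: set $R_0=\Ff_p[\xi,\eta]$, $R_1=\Ff_p[\xi,\eta]a\oplus\Ff_p[\xi,\eta]b$, $R_2=\Ff_p[\xi,\eta]ab$. The $SL_2(\Ff_p)$-action is by graded ring automorphisms preserving $\langle a,b\rangle=H^{1}(B\Gamma;\Ff_p)$, hence (by naturality of $\beta$) also $\langle\xi,\eta\rangle$, so each $R_k$ is $SL_2(\Ff_p)$-stable and $H^{*}(B\Gamma;\Ff_p)^{SL_2(\Ff_p)}=\bigoplus_{k=0}^{2}R_k^{SL_2(\Ff_p)}$. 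I would dispose of $k=0$ and $k=2$ first. For $k=0$ this is precisely Dickson's computation \eqref{eq:Dickson}: $R_0^{SL_2(\Ff_p)}=\Ff_p[f,h]$. For $k=2$, $y=ab$ spans $\Lambda^{2}\langle a,b\rangle$, on which $SL_2(\Ff_p)$ acts through its (trivial) determinant, so multiplication by $y$ is an equivariant isomorphism $\Ff_p[\xi,\eta]\xrightarrow{\ \cong\ }R_2$ and hence $R_2^{SL_2(\Ff_p)}=\Ff_p[f,h]\,y$.

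The main work is $k=1$. Identifying $W:=\langle\xi,\eta\rangle\cong\langle a,b\rangle$ via $\beta$, we get $R_1\cong\bigoplus_{m\ge 0}\mathrm{Sym}^{m}W\otimes W$; since $p$ is odd, $\Lambda^{2}W$ is the trivial module, and for each $m$ there is a short exact sequence of $SL_2(\Ff_p)$-modules
\[
0\longrightarrow \mathrm{Sym}^{m-1}W\xrightarrow{\ \iota\ }\mathrm{Sym}^{m}W\otimes W\xrightarrow{\ \mu\ }\mathrm{Sym}^{m+1}W\longrightarrow 0 ,
\]
with $\mu$ the multiplication and $\iota(g)=g(\xi\otimes b-\eta\otimes a)$; surjectivity of $\mu$, injectivity of $\iota$, $\mu\iota=0$, and matching dimensions are all elementary. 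Transported to $R_1$, the map $\iota$ is multiplication by $s=\beta(y)=\xi b-\eta a$ and $\mu$ sends $Pa+Qb$ to $P\xi+Q\eta$; both are $\Ff_p[\xi,\eta]$-linear. Applying the left-exact invariants functor and summing over $m$ produces an exact sequence of $\Ff_p[f,h]$-modules
\[
0\longrightarrow \Ff_p[f,h]\,s\longrightarrow R_1^{SL_2(\Ff_p)}\xrightarrow{\ \mu\ }\Ff_p[f,h] .
\]
Now $z=P^{1}(s)=\xi^{p}b-\eta^{p}a$ is $SL_2(\Ff_p)$-invariant (since $\beta$ and $P^{1}$ are equivariant) and $\mu(z)=\xi^{p}\eta-\xi\eta^{p}=f$, so $\im\mu$ is an ideal of $\Ff_p[f,h]$ containing $f$. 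The crucial claim is that $\im\mu=(f)$. Granting this, the sequence splits (its cokernel $(f)\cong\Ff_p[f,h]$ is free, a section being $f\cdot g\mapsto g\,z$), yielding $R_1^{SL_2(\Ff_p)}=\Ff_p[f,h]\,s\oplus\Ff_p[f,h]\,z$; directness of the sum is immediate, since a relation $Ps=Qz$ gives $P\xi=Q\xi^{p}$ and $P\eta=Q\eta^{p}$, forcing $Q(\xi^{p-1}-\eta^{p-1})=0$ and hence $P=Q=0$.

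The hard part is the claim $\im\mu=(f)$, equivalently that there is no $SL_2(\Ff_p)$-invariant $Pa+Qb$ with $P\xi+Q\eta\notin(f)$, i.e.\ that $R_1^{SL_2(\Ff_p)}$ is no larger than $\Ff_p[f,h]s+\Ff_p[f,h]z$. This is exactly M\`ui's theorem in the present case, so one may simply quote \cite{Mui75}. A self-contained argument would translate invariance under a maximal torus, a root subgroup, and the Weyl element of $SL_2(\Ff_p)$ into divisibility conditions on the pair $(P,Q)$ and deduce that $P\xi+Q\eta$ is divisible by each of the $p+1$ linear forms in $\xi,\eta$ whose product is $f=\xi\eta(\xi^{p-1}-\eta^{p-1})$; I expect the weight and degree bookkeeping there to be the only real obstacle.

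Finally I would assemble the pieces. By the above, $H^{*}(B\Gamma;\Ff_p)^{SL_2(\Ff_p)}$ is the free $\Ff_p[f,h]$-module on $1,s,y,z$, hence a ring generated by $f,h,s,y,z$. A one-line computation in $\Lambda_{\Ff_p}[a,b]$ gives $s^{2}=y^{2}=z^{2}=0$, $ys=yz=0$, and $sz=-fy$, so there is a well-defined surjective ring homomorphism
\[
\frac{\Ff_p[f,h]\otimes\Lambda_{\Ff_p}[s,y,z]}{(ys,\ yz,\ fy+sz)}\longrightarrow H^{*}(B\Gamma;\Ff_p)^{SL_2(\Ff_p)} .
\]
Eliminating $sz=-fy$ shows the left-hand ring is also $\Ff_p[f,h]$-free on $1,s,y,z$, and the homomorphism carries this basis to the basis found above, so it is an isomorphism, completing the proof.
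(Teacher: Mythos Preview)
The paper does not prove this proposition at all: it simply records it as the special case $n=2$ of M\`ui's theorem \cite{Mui75} (with a pointer to \cite{KM07}) and moves on. Your proposal, by contrast, supplies an actual argument. The decomposition by exterior degree in $a,b$, the identification $R_0^{SL_2(\Ff_p)}=\Ff_p[f,h]$ via Dickson, $R_2^{SL_2(\Ff_p)}=\Ff_p[f,h]\,y$ via the determinant character, and the Clebsch--Gordan sequence $0\to\mathrm{Sym}^{m-1}W\to\mathrm{Sym}^mW\otimes W\to\mathrm{Sym}^{m+1}W\to 0$ for the analysis of $R_1$ are all correct; so are the verification of the relations $ys=yz=0$, $sz=-fy$, and the final freeness comparison. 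This gives a clearer conceptual picture than the bare citation, showing exactly where each generator arises.

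The one point worth flagging is that at the crucial step---proving $\im\mu=(f)$, i.e.\ that $R_1^{SL_2(\Ff_p)}$ contains nothing beyond $\Ff_p[f,h]s\oplus\Ff_p[f,h]z$---you offer to ``simply quote \cite{Mui75}'', which is the very theorem the paper is invoking. With that citation in place your argument is an unpacking of M\`ui in rank $2$ rather than an independent proof. Your sketched self-contained route (using invariance under a torus, a root subgroup, and the Weyl element to force $P\xi+Q\eta$ to be divisible by each of the $p+1$ linear forms whose product is $f$) is the standard way to close this gap, and in rank $2$ the bookkeeping is not onerous; carrying it out would make the proof genuinely independent of \cite{Mui75}.
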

	\begin{cor}[Vistoli, {\cite[Proposition 5.10]{Vis07}}]\label{cor:vistoli integral}
		$H^*(B\Gamma)^{SL_2(\Ff_p)}$ is the subring of $H^*(B\Gamma)$ generated by $s,f,h$.
	\end{cor}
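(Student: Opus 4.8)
The plan is to derive the integral statement from the mod $p$ computation of Proposition~\ref{prop:mod p invariants}, exploiting that the reduction map $\rho\colon H^*(B\Gamma)\to H^*(B\Gamma;\Ff_p)$ is injective in positive degrees. Indeed, by the presentation $H^*(B\Gamma)\cong\Zz[\xi,\eta,s]/(p\xi,p\eta,ps,s^2)$ every element of $H^n(B\Gamma)$ with $n\geq1$ is annihilated by $p$, so multiplication by $p$ is the zero map on $H^n(B\Gamma)$, and the Bockstein exact sequence forces $\rho$ to be injective for $n\geq1$. Since the $SL_2(\Ff_p)$-action on $H^*(B\Gamma)$ is induced by an action on $B\Gamma$, the map $\rho$ is $SL_2(\Ff_p)$-equivariant; together with injectivity in positive degrees this identifies the positive-degree part of $H^*(B\Gamma)^{SL_2(\Ff_p)}$ with $(\im\rho)\cap H^*(B\Gamma;\Ff_p)^{SL_2(\Ff_p)}$ (the degree $0$ parts being $\Zz$ on both sides), so it suffices to compute this intersection.

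First I would identify $\im\rho$. As $H^*(B\Gamma)$ is generated as a ring by $\xi,\eta,s$ and $\rho(s)^2=0$, the image is $\Ff_p[\xi,\eta]\oplus\Ff_p[\xi,\eta]\rho(s)$ inside $H^*(B\Gamma;\Ff_p)=\Ff_p[\xi,\eta]\otimes\Lambda_{\Ff_p}[a,b]$, where $\rho(s)=\xi b-\eta a$. In particular, every element of $\im\rho$ has zero component in the summand $\Ff_p[\xi,\eta]\cdot ab$, and its component of $\Lambda$-degree $1$ is an $\Ff_p[\xi,\eta]$-multiple of $\xi b-\eta a$.

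Now let $w\in H^*(B\Gamma;\Ff_p)^{SL_2(\Ff_p)}$. By Proposition~\ref{prop:mod p invariants}, using the relations $s^2=y^2=z^2=0$, $ys=yz=0$ and $sz=-fy$ to normalize, we may write $w=A+Bs+Cy+Dz$ with $A,B,C,D\in\Ff_p[f,h]$, where $y=ab$ and $z=\xi^pb-\eta^pa$. Suppose $w\in\im\rho$. Comparing components in $\Ff_p[\xi,\eta]\cdot ab$ (to which $A$, $B$ and $D$ contribute nothing) gives $C=0$ in $\Ff_p[\xi,\eta]$, hence $C=0$ in $\Ff_p[f,h]$ since $f,h$ are algebraically independent by \eqref{eq:Dickson}. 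Comparing components of $\Lambda$-degree $1$ then yields $(B\xi+D\xi^p)b-(B\eta+D\eta^p)a=Q\cdot(\xi b-\eta a)$ for some $Q\in\Ff_p[\xi,\eta]$; cancelling $\xi$ and $\eta$ respectively in the integral domain $\Ff_p[\xi,\eta]$ gives $B+D\xi^{p-1}=Q=B+D\eta^{p-1}$, so $D(\xi^{p-1}-\eta^{p-1})=0$ and therefore $D=0$. Thus $w=A+Bs$ lies in the subring generated by $f$, $h$ and $\rho(s)$; conversely that subring lies in $(\im\rho)\cap H^*(B\Gamma;\Ff_p)^{SL_2(\Ff_p)}$, so the two coincide. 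Pulling back along $\rho$ (an isomorphism in positive degrees onto its image), $H^*(B\Gamma)^{SL_2(\Ff_p)}$ is the subring of $H^*(B\Gamma)$ generated by $s$ together with the evident integral lifts $\xi^p\eta-\eta^p\xi$ and $\xi^{p^2-p}+\eta^{p-1}(\xi^{p-1}-\eta^{p-1})^{p-1}$ of $f$ and $h$; these lifts are $SL_2(\Ff_p)$-invariant precisely because their mod $p$ reductions are and $\rho$ is injective in positive degrees.

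The argument is largely bookkeeping once Proposition~\ref{prop:mod p invariants} and the presentation of $H^*(B\Gamma)$ are in hand. The point that needs care is the injectivity of $\rho$ in positive degrees — this is what makes the comparison carried out in $H^*(B\Gamma;\Ff_p)$ faithful — together with tracking the $\Lambda_{\Ff_p}[a,b]$-grading to isolate the $y$- and $z$-contributions, which are exactly the invariant mod $p$ classes that fail to lift integrally. I do not anticipate a genuinely hard step; the main pitfall would be silently conflating the mod $p$ classes $f,h,s$ with integral classes without checking that they lift.
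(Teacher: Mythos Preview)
Your argument is correct and is precisely the natural way to deduce the statement from Proposition~\ref{prop:mod p invariants}: the paper states Corollary~\ref{cor:vistoli integral} without proof, immediately after Proposition~\ref{prop:mod p invariants}, so your derivation via the injectivity of $\rho$ in positive degrees and the identification of $(\im\rho)\cap H^*(B\Gamma;\Ff_p)^{SL_2(\Ff_p)}$ is exactly the intended filling-in. The only steps worth flagging as implicit in the paper but made explicit by you are that $\rho$ is $SL_2(\Ff_p)$-equivariant and injective in positive degrees, and that the $\Lambda$-grading separates the $y$- and $z$-contributions; all of these you handle correctly.
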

	\begin{cor}[Vistoli, {\cite[Proposition 3.1]{Vis07}}]\label{cor:vistoli image}
		$K_p$ is generated by $I_p$ and $\delta$. In other words, $\Theta_p(K_p)=\langle\Theta_p(\delta)\rangle=\langle\eta^{p^2-p}\rangle$.
	\end{cor}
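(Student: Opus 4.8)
The plan is to realize the restriction of $\Theta_p$ to $K_p$ as a factor of the map obtained by restricting $H^*(BPU(p))$ to the classifying space of a rank-one subgroup of $\Gamma$, and then to read off its image from the Crowley--Gu description of $\im B\pi^*$ together with Vistoli's computation of $H^*(B\Gamma)^{SL_2(\Ff_p)}$. The key observation is that the embedding $\Ff_p\ha T^p$, $\omega\mapsto(\omega,\dots,\omega^{p-1},1)$, used to define $\Theta_p$ sends the generator to $\tilde\tau$, so the composite homomorphism $\Ff_p\ha T^p\ha U(p)\xr{\pi}PU(p)$ is injective with image the cyclic subgroup $\langle\tau\rangle\subseteq\Gamma$. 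Functoriality of $B(-)$ therefore gives a commutative diagram in which
\[H^*(BPU(p))\xr{B\pi^*}H^*(BU(p))\xr{Bi_T^*}H^*(BT^p)\xr{\Theta_p}H^*(B\Ff_p)\]
equals the restriction $H^*(BPU(p))\xr{Bi_\Gamma^*}H^*(B\Gamma)\to H^*(B\langle\tau\rangle)\cong H^*(B\Ff_p)$.

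By Theorem \ref{thm:C-G} the image of $B\pi^*$ is $K_p$, and $Bi_T^*$ carries it isomorphically onto the subring $K_p\subseteq H^*(BT^p)$ on which $\Theta_p$ acts; hence the image of the displayed composite is exactly $\Theta_p(K_p)$. On the other side, $\im Bi_\Gamma^*\subseteq H^*(B\Gamma)^{SL_2(\Ff_p)}$, which by Corollary \ref{cor:vistoli integral} is the subring generated by $s$, $f=\xi^p\eta-\eta^p\xi$, and $h=\xi^{p^2-p}+\eta^{p-1}(\xi^{p-1}-\eta^{p-1})^{p-1}$. I would restrict these three generators to $B\langle\tau\rangle$: the inclusion $\langle\tau\rangle\ha\Gamma$ sends $a\mapsto0$ and $b$ to a generator, whence $\xi\mapsto0$ and $\eta\mapsto\eta$, while $s\mapsto0$ because $H^3(B\langle\tau\rangle)=0$. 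A one-line substitution gives $f\mapsto0$ and, since $p$ is odd, $h\mapsto\eta^{(p-1)+(p-1)^2}=\eta^{p^2-p}$. So the restriction of $H^*(B\Gamma)^{SL_2(\Ff_p)}$ to $H^*(B\langle\tau\rangle)\cong\Zz[\eta]/(p\eta)$ is the subring generated by $\eta^{p^2-p}$, and therefore $\Theta_p(K_p)\subseteq\langle\eta^{p^2-p}\rangle$. The reverse inclusion is immediate from \eqref{eq:delta}: $\delta\in K_p$ and $\Theta_p(\delta)=-\eta^{p^2-p}$, so $\Theta_p(\Zz[\delta])$ already exhausts the bound. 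Hence $\Theta_p(K_p)=\langle\Theta_p(\delta)\rangle=\langle\eta^{p^2-p}\rangle$.

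For the ring-theoretic assertion, given $g\in K_p$ I would pick $g'\in\Zz[\delta]\subseteq K_p$ with $\Theta_p(g')=\Theta_p(g)$; then $g-g'\in\ker(\Theta_p|_{K_p})=I_p$, so $K_p=I_p+\Zz[\delta]$, i.e.\ $K_p$ is generated by $I_p$ and $\delta$. The only points that require care are the identification of the composite group map $\Ff_p\to PU(p)$ with the inclusion $\langle\tau\rangle\ha\Gamma$, which is precisely what makes Corollary \ref{cor:vistoli integral} applicable, and the bookkeeping that $Bi_T^*$ matches the copy of $K_p$ inside $H^*(BU(p))$ with the copy inside $H^*(BT^p)$; I do not expect a serious obstacle here, since the substantive content has already been provided by Theorem \ref{thm:C-G} and Corollary \ref{cor:vistoli integral}.
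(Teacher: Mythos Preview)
Your proof is correct and follows essentially the same approach as the paper: both factor the map $\Theta_p|_{K_p}$ through the restriction $H^*(BPU(p))\to H^*(B\Gamma)^{SL_2(\Ff_p)}\to H^*(B\Ff_p\{\tau\})$, use Corollary~\ref{cor:vistoli integral} together with the computation that $s,f\mapsto 0$ and $h\mapsto\eta^{p^2-p}$ to bound the image above, and invoke $\Theta_p(\delta)=-\eta^{p^2-p}$ from \eqref{eq:delta} for the reverse inclusion. Your write-up simply spells out more of the bookkeeping (the identification of the embedding defining $\Theta_p$ with $\langle\tau\rangle\hookrightarrow\Gamma$, and the final ring-theoretic deduction $K_p=I_p+\Zz[\delta]$) that the paper leaves implicit.
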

	\begin{proof}
	Since the image of $s,f$ in $H^*(B\Ff_p\{\tau\})$ is zero and the image of $h$ is $\eta^{p^2-p}$, the image of $H^*(BPU(p))$ lies in $\langle\eta^{p^2-p}\rangle$. Consider the commutative diagram:
	\[\xymatrix{
		H^*(BPU(p))\ar[r]\ar[d]^{B\pi^*}&H^*(B\Ff_p\{\tau\})\ar@{=}[d]\\
		H^*(BU(p))\ar[r]^{\Theta_p}&H^*(B\Ff_p\{\tau\})}
	\]
	Since $\delta\in K_p=\im B\pi^*$ by Theorem \ref{thm:C-G} and $\Theta_p(\delta)=-\eta^{p^2-p}$, the desired equation follows immediately.
	\end{proof}

	The following result of Vavpeti\v c and Viruel plays a central role in the proof of Theorem \ref{thm:main}.
	\begin{thm}[Vavpeti\v c-Viruel, {\cite[Theorem 2.5]{VV05}}]\label{thm:V-V}
		The restriction map 
		\[H^*(BPU(p);\Ff_p)\to H^*(BT_{PU(p)};\Ff_p)\times H^*(B\Gamma;\Ff_p)\]
		is injective.
	\end{thm}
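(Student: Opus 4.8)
The plan is to realize the stated map as the detection map on elementary abelian $p$-subgroups and to combine Quillen's $F$-isomorphism theorem with the $p$-local homotopy type of $BPU(p)$ to remove the ambiguity up to nilpotents. Since every elementary abelian $p$-subgroup of $PU(p)$ is, up to conjugacy, contained either in the maximal torus $T=T_{PU(p)}$ or in the maximal nontoral subgroup $\Gamma$, restricting to $BT$ and $B\Gamma$ records exactly the same information as restricting to the whole Quillen category $\mathcal{A}(PU(p))$. Quillen's theorem then tells us immediately that the kernel of the displayed map is contained in the nilradical of $H^*(BPU(p);\Ff_p)$, so the entire content of the theorem is the assertion that no nonzero nilpotent class is killed by both restrictions.

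To control the nilpotents I would use the Jackowski--McClure centralizer decomposition $BPU(p)^{\wedge}_{p}\simeq\bigl(\operatorname{hocolim}_{\mathcal{A}(PU(p))} BC_{PU(p)}(E)\bigr)^{\wedge}_{p}$ and its associated Bousfield--Kan spectral sequence $E_2^{s,t}=\lim\nolimits^{s}H^t(BC_{PU(p)}(E);\Ff_p)\Rightarrow H^{s+t}(BPU(p);\Ff_p)$, whose edge homomorphism is precisely restriction to centralizers. Two structural inputs make this tractable. First, the centralizer of any toral elementary abelian contains $T$ and hence has torsion-free polynomial cohomology already detected on $T$ (this is the torsion-free regime governed by Theorem \ref{thm:C-G}). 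Second, $\Gamma$ is self-centralizing, $C_{PU(p)}(\Gamma)=\Gamma$: the preimage of $\Gamma$ in $U(p)$ carries the unique irreducible Heisenberg representation, so by Schur its centralizer is scalar, forcing $C_{PU(p)}(\Gamma)=\Gamma$ and $H^*(BC_{PU(p)}(\Gamma))=H^*(B\Gamma)$, which is itself detected on its own elementary abelians. Consequently the restriction to centralizers loses nothing when factored further through $T$ and $\Gamma$, and the injectivity we want reduces to the vanishing of the contribution of the higher derived limits $\lim\nolimits^{s}$, $s\ge 1$, to the kernel.

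The main obstacle is exactly this derived-limit vanishing: one must rule out a class in $\lim\nolimits^{s}$ with $s\ge 1$ that detects a nilpotent element dying on both $T$ and $\Gamma$. Because $PU(p)$ has $p$-rank $2$ the Quillen poset is short, so one expects $\lim\nolimits^{s}=0$ for $s\ge 2$ and only a manageable $\lim\nolimits^{1}$ term to survive; the delicate point is to show this $\lim\nolimits^{1}$ makes no contribution to the kernel. Concretely, this amounts to verifying that the torsion classes of $H^*(BPU(p);\Ff_p)$ --- which by Theorem \ref{thm:Vezzosi} are all $p$-torsion and which, via Proposition \ref{prop:d_3}, are produced by the higher differentials of the Serre spectral sequence \eqref{eq:Serre} --- all restrict nontrivially to $B\Gamma$. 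Matching these fibration-born torsion classes with their images in the invariant ring $H^*(B\Gamma;\Ff_p)^{SL_2(\Ff_p)}$ of Proposition \ref{prop:mod p invariants} is where the real work lies, and once carried out it closes the argument.
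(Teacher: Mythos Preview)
The paper does not prove this theorem; it simply quotes it from Vavpeti\v c--Viruel \cite{VV05} and uses it as a black box. So there is no ``paper's own proof'' to compare against, and your proposal should be read as an independent attempt to supply one.

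Your outline is reasonable in spirit---Quillen detection up to nilpotents plus a centralizer decomposition to handle the nilradical is exactly the kind of machinery one expects---but it is not a proof. You yourself say that the decisive step, namely showing that the $\lim^1$ contribution does not produce undetected classes (equivalently, that every class not detected on $T$ is detected on $\Gamma$), ``is where the real work lies, and once carried out it closes the argument.'' That is precisely the content of the theorem, so the proposal stops at the point where the difficulty begins. Moreover, your suggested way of closing it---matching the ``torsion classes of $H^*(BPU(p);\Ff_p)$'' produced by the Serre spectral sequence \eqref{eq:Serre} against the invariants in Proposition~\ref{prop:mod p invariants}---is both terminologically off (over $\Ff_p$ there are no torsion classes; presumably you mean the mod~$p$ reductions of the integral torsion, or equivalently the classes not lifting to the free part) and in danger of circularity: in this paper, the identification of $\im Bi_\Gamma^*$ with the full invariant ring (Lemma~\ref{lem:image i_gamma}) and the control of $\ker Bi_\Gamma^*$ (Lemmas~\ref{lem:Bockstein}, \ref{lem:kernel J_p}) are all \emph{deduced from} Theorem~\ref{thm:V-V}, so you cannot invoke them here.

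If you want to turn this into a real proof, you must carry out the $\lim^1$ analysis directly---compute the functor $E\mapsto H^*(BC_{PU(p)}(E);\Ff_p)$ on the Quillen category, identify the nonconjugate rank-one subgroups (toral versus those sitting in $\Gamma$), and check acyclicity of the resulting diagram---or else appeal to the Lannes $T$-functor / unstable-module techniques that Vavpeti\v c and Viruel actually use. Either way, the argument cannot be completed by the fibration \eqref{eq:fibration} and Proposition~\ref{prop:d_3} alone.
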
 
	\begin{cor}\label{cor:V-V}
		The map $B\pi^*\times Bi_\Gamma^*:$ 
		\[H^*(BPU(p);\Ff_p)\to H^*(BU(p);\Ff_p)\times H^*(B\Gamma;\Ff_p)\]
		is injective.
	\end{cor}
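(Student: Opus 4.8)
The plan is to reduce the statement to Theorem \ref{thm:V-V}. It suffices to show that the kernel of $B\pi^*$ is contained in the kernel of the restriction $Bi_{T_{PU(p)}}^*$ to the maximal torus of $PU(p)$, since then
\[
\ker\bigl(B\pi^*\times Bi_\Gamma^*\bigr)=\ker B\pi^*\cap\ker Bi_\Gamma^*\subseteq \ker Bi_{T_{PU(p)}}^*\cap\ker Bi_\Gamma^*=\ker\bigl(Bi_{T_{PU(p)}}^*\times Bi_\Gamma^*\bigr)=0.
\]
To see the containment, I would use the commuting square of Lie groups
\[
\xymatrix{
T^p \ar[r]^{i_{T^p}} \ar[d]_{q} & U(p) \ar[d]^{\pi}\\
T_{PU(p)} \ar[r]^{i_{T_{PU(p)}}} & PU(p),
}
\]
in which $q$ is the restriction of the quotient map $\pi\colon U(p)\to PU(p)=U(p)/S^1$, and which identifies $T_{PU(p)}$ with $T^p/S^1$. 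Passing to classifying spaces and mod $p$ cohomology gives the identity
\[
Bi_{T^p}^*\circ B\pi^*\;=\;Bq^*\circ Bi_{T_{PU(p)}}^*\colon H^*(BPU(p);\Ff_p)\longrightarrow H^*(BT^p;\Ff_p).
\]

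The one substantive ingredient is that $Bq^*$ is injective. The central circle $S^1\subset T^p$ corresponds to the diagonal sublattice $\Zz(1,\dots,1)\subset\Zz^p$, which is a direct summand (the quotient $\Zz^{p-1}$ is torsion free); hence $T^p\cong S^1\times T_{PU(p)}$ with $q$ the projection onto the second factor. Thus $Bq\colon BT^p\cong BS^1\times BT_{PU(p)}\to BT_{PU(p)}$ is the projection of a product and admits a section, so $Bq^*$ is a split monomorphism.

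Granting this, the corollary is immediate: if $\alpha\in H^*(BPU(p);\Ff_p)$ satisfies $B\pi^*(\alpha)=0$, then $Bq^*\bigl(Bi_{T_{PU(p)}}^*(\alpha)\bigr)=Bi_{T^p}^*\bigl(B\pi^*(\alpha)\bigr)=0$, whence $Bi_{T_{PU(p)}}^*(\alpha)=0$ by injectivity of $Bq^*$; together with $Bi_\Gamma^*(\alpha)=0$ this forces $\alpha=0$ by Theorem \ref{thm:V-V}. I do not anticipate any genuine obstacle here: the whole difficulty is absorbed into Theorem \ref{thm:V-V}, and the only extra input is the elementary observation that $BT_{PU(p)}$ is a retract of $BT^p$, which makes $Bq^*$ injective.
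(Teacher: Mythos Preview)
Your proof is correct and follows essentially the same approach as the paper: both rely on the injectivity of $Bq^*\colon H^*(BT_{PU(p)};\Ff_p)\to H^*(BT^p;\Ff_p)$ together with Theorem~\ref{thm:V-V}. The paper phrases this as ``the map to $H^*(BT_{U(p)};\Ff_p)\times H^*(B\Gamma;\Ff_p)$ is injective and factors through $B\pi^*\times Bi_\Gamma^*$,'' whereas you unpack the same content via the kernel inclusion $\ker B\pi^*\subseteq\ker Bi_{T_{PU(p)}}^*$ and supply an explicit justification for the injectivity of $Bq^*$.
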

	\begin{proof}
		The natural map $H^*(BT_{PU(p)};\Ff_p)\to H^*(BT_{U(p)};\Ff_p)$ is injective, so by Theorem \ref{thm:V-V} the map
		\[H^*(BPU(p);\Ff_p)\to H^*(BT_{U(p)};\Ff_p)\times H^*(B\Gamma;\Ff_p)\] 
		is injective, and this map factors through 
		the map in the corollary.
	\end{proof}
	
	\begin{cor}\label{cor:integral injection}
		The  map $B\pi^*\times Bi_\Gamma^*:$
		\[H^*(BPU(p))\to H^*(BU(p))\times H^*(B\Gamma)\]
		is injective.
	\end{cor}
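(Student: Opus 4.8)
The plan is to reduce the integral statement to its mod $p$ counterpart, Corollary \ref{cor:V-V}, using the two structural facts about $H^*(BPU(p))$ that are already available: that all of its torsion is $p$-torsion (Theorem \ref{thm:Vezzosi} with $n=p$), and that $\ker B\pi^*$ is precisely the torsion submodule. The latter follows from Theorem \ref{thm:C-G}: since $H^*(BU(p))$ is torsion-free, $B\pi^*$ annihilates torsion, so $B\pi^*$ induces a surjection $H^*(BPU(p))/\mathrm{torsion}\twoheadrightarrow \im B\pi^*$ onto a group that is abstractly isomorphic to it in each (finitely generated) degree, hence an isomorphism; therefore $\ker B\pi^*=\mathrm{torsion}$.

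Given this, I would take a class $\alpha$ in the kernel of $B\pi^*\times Bi_\Gamma^*$. From $B\pi^*(\alpha)=0$ and the identification above, $\alpha$ is a torsion class, so $p\alpha=0$ by Theorem \ref{thm:Vezzosi}. Next I would pass to mod $p$ coefficients: by naturality of the reduction $\rho$, the class $\rho(\alpha)$ lies in the kernel of the mod $p$ map $B\pi^*\times Bi_\Gamma^*$, which is injective by Corollary \ref{cor:V-V}; hence $\rho(\alpha)=0$. The Bockstein long exact sequence attached to $0\to\Zz\xr{p}\Zz\to\Ff_p\to 0$ then gives $\alpha\in p\cdot H^*(BPU(p))$, say $\alpha=p\beta$.

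Finally I would conclude by a splitting argument in the relevant finitely generated cohomology group: writing $H^n(BPU(p))=F\oplus T$ with $F$ free and $T$ the torsion submodule, the hypothesis that $T$ is killed by $p$ (this is exactly where ``only $p$-torsion'', rather than merely ``finite torsion'', is used) gives $p\cdot H^n(BPU(p))=pF\subseteq F$, so $\alpha=p\beta$ lies in the free summand; but $\alpha$ is torsion, i.e.\ $\alpha\in T$, and $F\cap T=0$, whence $\alpha=0$. This proves Corollary \ref{cor:integral injection}.

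I do not expect a genuine obstacle here: the statement is a formal consequence of results already in hand. The only points needing a little care are the precise identification $\ker B\pi^*=\mathrm{torsion}$ extracted from Theorem \ref{thm:C-G}, and the elementary observation $p\cdot H^n(BPU(p))\cap(\mathrm{torsion})=0$, which is the one place the ``$p$-torsion'' refinement of Theorem \ref{thm:Vezzosi} is essential.
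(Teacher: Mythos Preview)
Your proof is correct and follows essentially the same route as the paper's: reduce mod $p$ via the commutative square with $\rho$, invoke Corollary \ref{cor:V-V} to get $\rho(\alpha)=0$, hence $\alpha\in pH^*(BPU(p))$, and then use that $pH^*(BPU(p))$ is torsion-free (Theorem \ref{thm:Vezzosi}) together with Theorem \ref{thm:C-G} to conclude $\alpha=0$. The paper phrases the last step as ``$B\pi^*$ is injective on $pH^*(BPU(p))$'' rather than via your intermediate observation that $\alpha$ is torsion, but this is only a cosmetic difference; your extraction of $\ker B\pi^*=\text{torsion}$ from Theorem \ref{thm:C-G} is exactly the content the paper is using implicitly.
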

	\begin{proof}
		Consider the commutative diagram
		\[\xymatrix{
			H^*(BPU(p))\ar[r]\ar[d]^{\rho}
			&H^*(BU(p))\times H^*(B\Gamma)\ar[d]^{\rho}\\
			H^*(BPU(p);\Ff_p)\ar[r]
			&H^*(BU(p);\Ff_p)\times H^*(B\Gamma;\Ff_p)}
		\]
		The kernel of the left vertical map is $pH^*(BPU(p))$. But $pH^*(BPU(p))$ is a subset of $H^*(BPU(p))/torsion$ by Theorem \ref{thm:Vezzosi}, and so the restriction of $B\pi^*$ to $pH^*(BPU(p))$ is injective, then the result follows by Corollary \ref{cor:V-V}.
	\end{proof}

	\section{Proof of Theorem \ref{thm:main}}\label{sec:proof of main thm}
	Before giving the proof of Theorem \ref{thm:main}, we first establish a few preliminary facts.
	\begin{lem}\label{lem:Bockstein}
		Let $u\in H^*(BPU(p);\Ff_p)$ be a nonzero element such that $i_\Gamma^*(u)=0$. Then $u$ is a permanent cycle in the Bockstein spectral sequence associated to $H^*(BPU(p);\Ff_p)$.
	\end{lem}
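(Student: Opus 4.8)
The plan is to show that $u$ lifts to an integral cohomology class; once this is known, the fact that $u$ is a permanent cycle in the Bockstein spectral sequence is formal. The first step is to prove $\beta u=0$. By hypothesis $Bi_\Gamma^*(\beta u)=\beta\,Bi_\Gamma^*(u)=0$, and since $H^*(BU(p))$ is torsion free the mod $p$ Bockstein vanishes identically on $H^*(BU(p);\Ff_p)$, so $B\pi^*(\beta u)=\beta\,B\pi^*(u)=0$. As $B\pi^*\times Bi_\Gamma^*$ is injective by Corollary \ref{cor:V-V}, this forces $\beta u=0$.

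Next I would upgrade $\beta u=0$ to an integral lift. Write $\beta=\rho\circ\wh\beta$ with $\wh\beta\colon H^*(-;\Ff_p)\to H^{*+1}(-)$ the integral Bockstein. Then $\beta u=0$ gives $\wh\beta u\in\ker\rho=pH^{*+1}(BPU(p))$, say $\wh\beta u=pz$. Since $\wh\beta u$ is $p$-torsion we get $p^2z=0$, so $z$ is a torsion class, and Theorem \ref{thm:Vezzosi} forces $pz=0$; hence $\wh\beta u=0$ and $u=\rho(\w u)$ for some $\w u\in H^*(BPU(p))$.

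Finally I would observe that any class in $\im\rho$ is automatically a permanent cycle: for every $r\ge 1$ the mod $p^r$ reduction of $\w u$ lifts $u$ to $H^*(BPU(p);\Zz/p^r)$, and because the composite of the reduction $H^*(-)\to H^*(-;\Zz/p^r)$ with the $\Zz/p^r$-Bockstein $H^*(-;\Zz/p^r)\to H^{*+1}(-)$ is zero, the $r$-th Bockstein of $u$ computed from this lift vanishes. Thus $d_r[u]=0$ in the Bockstein spectral sequence for every $r$, which is the assertion.

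I do not expect a genuine obstacle: the proof is essentially an assembly of Corollary \ref{cor:V-V} (injectivity of $B\pi^*\times Bi_\Gamma^*$) and Theorem \ref{thm:Vezzosi} (all torsion in $H^*(BPU(p))$ is $p$-torsion). The one point deserving a little care is the implication $\beta u=0\Rightarrow\wh\beta u=0$ in the second step, where one must use that the torsion is annihilated by $p$ itself and not merely by some higher power of $p$.
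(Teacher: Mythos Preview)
Your proof is correct and rests on the same two inputs as the paper's---the injectivity of $B\pi^*\times Bi_\Gamma^*$ (Corollary~\ref{cor:V-V}) and the fact that all torsion in $H^*(BPU(p))$ is killed by $p$ (Theorem~\ref{thm:Vezzosi}). The paper argues by contradiction: it first notes that $Bi_T^*(u)\neq 0$ forces $u$ to be even-dimensional, then separately rules out $\beta u\neq 0$ and $u\in\im\beta$. Your route is more direct: you show $\beta u=0$ immediately from the vanishing of the Bockstein on $H^*(BU(p);\Ff_p)$ together with injectivity, and then carefully upgrade this to $\wh\beta u=0$ using that the torsion is annihilated by $p$ itself. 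This is slightly cleaner, and in fact the paper's first case ($u\in\im\beta$) is not needed for the literal statement ``permanent cycle''---it establishes the stronger conclusion that $u$ survives nontrivially to $E_\infty$, though for the applications in Lemmas~\ref{lem:image pi} and~\ref{lem:kernel J_p} only the integral lift $u\in\rho(H^*(BPU(p)))$ is used, which is exactly what you prove.
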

	\begin{proof}
		Since $Bi_\Gamma^*(u)=0$, by Theorem \ref{thm:V-V} $Bi^*_T(u)\neq 0$, so that $u$ is even dimensional. If $\alpha$ is not a permanent cycle in the Bockstein spectral sequence of $H^*(BPU(p);\Ff_p)$, then there exists $v\in H^*(BPU(p);\Ff_p)$ such that either $\beta(v)=u$ or $\beta(u)=v$, since $H^*(BPU(p))$ has only $p$-torsion by Theorem \ref{thm:Vezzosi}. The first case is impossible because $Bi^*_T(u)$ is a permanent cycle in the Bockstein spectral sequence of $H^*(BT;\Ff_p)$. For the second case, $v$ is odd dimensional so that $Bi^*_T(v)=0$, and then $Bi_\Gamma^*(v)\neq 0$ by Theorem \ref{thm:V-V}.
		But $Bi^*_\Gamma(v)=Bi^*_\Gamma\beta(u)=\beta Bi^*_\Gamma(u)=0$, a contradiction.
	\end{proof}
	
	\begin{lem}\label{lem:image i_gamma}
	The image of the map $Bi_\Gamma^*:H^*(BPU(p);\Ff_p)\to H^*(B\Gamma;\Ff_p)$ is $H^*(B\Gamma;\Ff_p)^{SL_2(\Ff_p)}$.
	\end{lem}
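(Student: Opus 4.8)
The plan is to prove the two inclusions separately. One of them is automatic: since $W_{PU(p)}(\Gamma)=SL_2(\Ff_p)$ by the theorem of Andersen et al.\ quoted above, the image of $Bi_\Gamma^*$ lies in $H^*(B\Gamma;\Ff_p)^{SL_2(\Ff_p)}$. For the reverse inclusion, I would use that by Proposition \ref{prop:mod p invariants} the invariant ring is generated as a ring by the five classes $f,h,s,y,z$; since $Bi_\Gamma^*$ is a ring homomorphism, it is enough to show that each of these lies in $\im Bi_\Gamma^*$.

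To reach $s,z,f$ I would go through the Bockstein spectral sequence. Set $x_3=\rho(\chi^*(x))\in H^3(BPU(p);\Ff_p)$. Because $H^2(BPU(p))=0$ and $H^3(BPU(p))\cong\Zz/p$, the Bockstein exact sequence shows $x_3=\beta(w)$ for some $w\in H^2(BPU(p);\Ff_p)$, and $x_3\neq 0$; in particular $x_3$ is not a permanent cycle in the Bockstein spectral sequence of $H^*(BPU(p);\Ff_p)$, so the contrapositive of Lemma \ref{lem:Bockstein} forces $Bi_\Gamma^*(x_3)\neq 0$. Inspecting the presentation in Proposition \ref{prop:mod p invariants}, the degree-$3$ part of $H^*(B\Gamma;\Ff_p)^{SL_2(\Ff_p)}$ is the line spanned by $s$, so $Bi_\Gamma^*(x_3)=\lambda s$ with $\lambda\in\Ff_p^\times$, giving $s\in\im Bi_\Gamma^*$. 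Naturality of the Steenrod operations then yields $Bi_\Gamma^*(P^1x_3)=\lambda P^1 s=\lambda z$ and $Bi_\Gamma^*(\beta P^1x_3)=\lambda\beta z=\lambda f$, so $z,f\in\im Bi_\Gamma^*$ as well. For $y$, I would reuse the class $w$: $w\neq 0$ and $\beta(w)=x_3\neq 0$, so $w$ is not a permanent cycle, and Lemma \ref{lem:Bockstein} again gives $Bi_\Gamma^*(w)\neq 0$; as the degree-$2$ part of $H^*(B\Gamma;\Ff_p)^{SL_2(\Ff_p)}$ is the line spanned by $y$, we conclude $y\in\im Bi_\Gamma^*$.

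The element $h$ is the step I expect to be the main obstacle. By Theorem \ref{thm:C-G} the symmetric polynomial $\delta=\prod_{i\neq j}(t_i-t_j)$ lies in $K_p=\im B\pi^*$, so I can choose $\hat\delta\in H^{2(p^2-p)}(BPU(p))$ with $B\pi^*(\hat\delta)=\delta$ and put $u=\rho(\hat\delta)$. A degree count inside the presentation of Proposition \ref{prop:mod p invariants} shows that the degree-$2p(p-1)$ graded piece of $H^*(B\Gamma;\Ff_p)^{SL_2(\Ff_p)}$ is one-dimensional, spanned by $h$: since $\deg f=2(p+1)$ and $p(p-1)$ leaves a nonzero remainder on division by $p+1$, no monomial $f^ah^b$ or $f^ah^by$ other than $h$ has that degree. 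Hence $Bi_\Gamma^*(u)=c'h$ for a scalar $c'\in\Ff_p$, and it remains to evaluate $c'$. For this I would restrict once more, along $B\langle\tau\rangle\hookrightarrow B\Gamma$: the composite $\langle\tilde\tau\rangle\hookrightarrow U(p)\xrightarrow{\pi}PU(p)$ agrees with $\langle\tau\rangle\hookrightarrow\Gamma\hookrightarrow PU(p)$, so this restriction of $u$ is, via $B\pi^*(u)=\rho(\delta)$, equal to $\rho(\Theta_p(\delta))=-\eta^{p^2-p}$ by \eqref{eq:delta}; on the other hand the same restriction kills $\xi$ and $a$, sending $h$ to $\eta^{p-1}(-\eta^{p-1})^{p-1}=\eta^{p^2-p}$. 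Comparing, $c'=-1$, so $Bi_\Gamma^*(u)=-h$ and $h\in\im Bi_\Gamma^*$.

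Once $f,h,s,y,z$ are all exhibited in $\im Bi_\Gamma^*$, the reverse inclusion follows and the lemma is proved. The delicate points are the two invocations of Lemma \ref{lem:Bockstein} — which rely on $H^*(BPU(p))$ having only $p$-torsion, so that $H^3(BPU(p))$ is pure torsion — and the treatment of $h$: first the degree bookkeeping pinning the relevant graded piece of the invariant ring to a single line, and then the identification of the composite $\langle\tilde\tau\rangle\to PU(p)$ which reduces the scalar $c'$ to Vistoli's identity \eqref{eq:delta}.
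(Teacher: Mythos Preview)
Your proof is correct and follows essentially the same route as the paper: show the five generators $y,s,z,f,h$ of the invariant ring lie in the image, obtaining $s$ from $x_3=\rho(\chi^*(x))$, then $z,f$ by naturality of $P^1$ and $\beta$, and $h$ by restricting the class hitting $\delta$ further to $B\langle\tau\rangle$ and invoking \eqref{eq:delta}. The only notable difference is that you establish $Bi_\Gamma^*(x_3)\neq 0$ and $Bi_\Gamma^*(w)\neq 0$ via the contrapositive of Lemma~\ref{lem:Bockstein}, whereas the paper argues more directly from Theorem~\ref{thm:V-V}: since $x_3$ has odd degree, $Bi_T^*(x_3)=0$, so injectivity forces $Bi_\Gamma^*(x_3)\neq 0$, and then $Bi_\Gamma^*(x_2)=y$ follows by applying $\beta$ and comparing with the one-dimensional degree-$2$ piece. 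Your detour through Lemma~\ref{lem:Bockstein} is perfectly valid (and that lemma itself rests on Theorem~\ref{thm:V-V}), just slightly less direct.
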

	\begin{proof}
		In the notation of Proposition \ref{prop:mod p invariants}, we need to show that $f,h,s,y,z\in\im Bi_\Gamma^*$. A brief inspection of the quotient ring in Proposition \ref{prop:mod p invariants} shows that $H^*(B\Gamma;\Ff_p)^{SL_2(\Ff_p)}$ in dimensions $2,3,2p+1,2p+2,2p^2-2p$ are $\Ff_p$, which are spanned by $y,s,z,f,h$ respectively. Since $H^3(BPU(p))=\Ff_p\{\chi^*(x)\}$, there exist $x_i\in H^i(BPU(p);\Ff_p)$, $i=2,3$, such that $\beta(x_2)=x_3=\rho\chi^*(x)$. Clearly, $Bi^*_T(x_3)=0$, so rescaling $s$ if necessary, $Bi^*_\Gamma(x_3)=s$ by Theorem \ref{thm:V-V}, and therefore $Bi^*_\Gamma(x_2)=y$. Note that $z=P^1(s)$ and $f=\beta(z)$. Hence $s,y,z,f\in\im Bi^*_\Gamma$. 
		
		To see that $h\in\im Bi^*_\Gamma$, we use the commutative diagram
		\[\xymatrix{
			H^*(BPU(p))\ar[r]\ar[d]^{B\pi^*}&H^*(B\Ff_p\{\tau\})\ar@{=}[d]\\
			H^*(BU(p))\ar[r]^{\Theta_p}&H^*(B\Ff_p\{\tau\})}
		\]
Since $\delta\in K_p=\im B\pi^*$ by Theorem \ref{thm:C-G} and $\Theta_p(\delta)\neq 0$ by \eqref{eq:delta}, there exists $x_{2p^2-2p}\in H^{2p^2-2p}(BPU(p))$ such that $B\pi^*(x_{2p^2-2p})=\delta$, and therefore its image in $H^{2p^2-2p}(B\Ff_p\{\tau\})$ is nonzero.
Since the upper map factors through \[H^{2p^2-2p}(BPU(p))\xr{Bi_\Gamma^*}H^{2p^2-2p}(B\Gamma)^{SL_2(\Ff_p)}=\Ff_p\{h\},\]it follows that $Bi_\Gamma^*(x_{2p^2-2p})=-h$, noting that the images of $\delta$ and $h$ in $H^*(B\Ff_p\{\tau\})$ are $-\eta^{p^2-p}$ and $\eta^{p^2-p}$ respectively.
 \end{proof}
	
	\begin{lem}\label{lem:image pi}
		The image of the map $B\pi^*:H^*(BPU(p);\Ff_p)\to H^*(BU(p);\Ff_p)$ is $L_p$. Here $L_p$ is the subring as in Theorem \ref{thm:main}. 
	\end{lem}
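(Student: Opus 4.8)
The plan is to establish the two inclusions $L_p\subseteq\im B\pi^*$ and $\im B\pi^*\subseteq L_p$ separately, working throughout in the mod $p$ spectral sequence \eqref{eq:Serre}, for which $\im B\pi^*$ is exactly the edge term $E_\infty^{0,*}$.

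\emph{The inclusion $L_p\subseteq\im B\pi^*$.} First, $\rho(K_p)\subseteq\im B\pi^*$: by Theorem \ref{thm:C-G} the integral map $B\pi^*$ has image $K_p$, and applying $\rho$ together with its naturality with respect to $B\pi$ gives the claim. Second, $c_1\in\im B\pi^*$: since $\nabla_p(c_1)=\nabla_p(\sigma_1)=p\sigma_0=0$ in $H^*(BU(p);\Ff_p)$ by \eqref{eq:nabla}, Proposition \ref{prop:d_3} gives $d_3(c_1)=0$, and any later differential out of $E_r^{0,2}$ lands in $E_r^{r,3-r}=0$ for $r\ge 4$; hence $c_1$ survives to $E_\infty^{0,2}$, so $c_1\in\im B\pi^*$ (and incidentally $B\pi^*(x_2)$ is a nonzero multiple of $c_1$). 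Since $\im B\pi^*$ is a subring, it contains the subring $L_p$ generated by $\rho(K_p)$ and $c_1$.

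\emph{The inclusion $\im B\pi^*\subseteq L_p$.} By Proposition \ref{prop:d_3}, $d_3$ on $E_3^{0,*}=H^*(BU(p);\Ff_p)$ is the operator $f\mapsto\nabla_p(f)\,x$, and since $H^3(K(\Zz,3);\Ff_p)=\Ff_p\{x_3\}$ this identifies $E_4^{0,*}$ with the kernel of $\nabla_p$ acting on $H^*(BU(p);\Ff_p)$. As $H^s(K(\Zz,3);\Ff_p)=0$ for $s\in\{1,2\}\cup\{4,\dots,2p\}$, no differential leaves the zeroth column strictly between $d_3$ and $d_{2p+1}$, so
\[\im B\pi^*=E_\infty^{0,*}\subseteq E_{2p+1}^{0,*}=E_4^{0,*}=\ker\bigl(\nabla_p|_{H^*(BU(p);\Ff_p)}\bigr).\]
One checks $L_p\subseteq\ker(\nabla_p|_{H^*(BU(p);\Ff_p)})$, but the inclusion is strict in general: because $\nabla_p$ fails to be surjective over $\Zz_{(p)}$ in positive degrees, the reduction $\ker(\nabla_p|_{H^*(BU(p);\Ff_p)})$ picks up classes not reached from $K_p$ (for $p=3$, for instance, $c_2^2-c_1c_3$ lies in the kernel but not in $L_3$). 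So the $d_3$ formula alone does not suffice, and one must show that the subring of permanent cycles inside $\ker(\nabla_p|_{H^*(BU(p);\Ff_p)})$ is exactly $L_p$. I would do this by computing the first potentially nonzero higher differential $d_{2p+1}$ on the zeroth column --- whose target is a subquotient of $\Ff_p\{x_{2p+1}\}\otimes H^*(BU(p);\Ff_p)$, with $x_{2p+1}=P^1x_3$ --- using the analysis of the higher differentials of \eqref{eq:Serre} in \cite{Gu21}; the expected shape is a ``Frobenius shift'' of Proposition \ref{prop:d_3}, namely a $\nabla_p$-type operator followed by multiplication by $x_{2p+1}$, whose kernel on $\ker\nabla_p$ is precisely $L_p$, after which any remaining differentials are handled the same way. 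An alternative is a Hilbert-series comparison, since $L_p\subseteq\im B\pi^*\subseteq\ker\nabla_p$ and the additive structure of $H^*(BPU(p);\Ff_p)$ is known by \cite{KY08}: one would then only need to pin down the positive-filtration part of $H^*(BPU(p);\Ff_p)$ --- the ideal generated by the transgressive classes $x_3=\rho\chi^*(x)$, $x_{2p+1}=P^1x_3$, $x_{2p+2}=\beta x_{2p+1}$ --- which by Corollary \ref{cor:V-V} and Lemma \ref{lem:image i_gamma} embeds into $H^*(B\Gamma;\Ff_p)^{SL_2(\Ff_p)}$.

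The step I expect to be the real obstacle is exactly this last one: controlling the higher differentials $d_{2p+1},d_{2p+2},\dots$ out of the zeroth column, equivalently proving that every element of $\ker(\nabla_p|_{H^*(BU(p);\Ff_p)})$ not in $L_p$ supports a nonzero differential. Everything before it --- the identification $E_4^{0,*}=\ker\nabla_p$, the vanishing of $d_4,\dots,d_{2p}$ for degree reasons, and the whole inclusion $L_p\subseteq\im B\pi^*$ --- is routine. In the Hilbert-series variant the same difficulty reappears as the problem of identifying the image of $Bi_\Gamma^*$ restricted to $\ker B\pi^*$ inside $H^*(B\Gamma;\Ff_p)^{SL_2(\Ff_p)}$.
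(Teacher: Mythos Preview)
Your inclusion $L_p\subseteq\im B\pi^*$ is correct and is exactly how the paper argues it.

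For the reverse inclusion you have correctly located the gap, and it is a genuine one: the mod $p$ kernel of $\nabla_p$ on $H^*(BU(p);\Ff_p)$ is strictly larger than $L_p$, so one really must show that the extra classes support nontrivial higher differentials. Your proposed fix via an explicit formula for $d_{2p+1}$ (and beyond) is plausible but would be a substantial computation, and you do not carry it out; the Hilbert-series alternative likewise requires an independent identification of $\ker B\pi^*$ that you do not supply. As written, the argument is incomplete at precisely the point you flagged.

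The paper closes the gap by a completely different and much shorter route that bypasses all higher differentials. It introduces explicit classes $x_2,x_3,x_{2p+1},x_{2p+2},x_{2p^2-2p}\in H^*(BPU(p);\Ff_p)$ (coming from $\rho\chi^*(x)$, Steenrod operations on it, and an integral preimage of $\delta$) and lets $I$ be the ideal they generate. Two easy checks then do all the work: $B\pi^*(I)\subset\Ff_p[c_1,\delta]\subset L_p$, while $Bi_\Gamma^*(I)$ is all of $H^{>0}(B\Gamma;\Ff_p)^{SL_2(\Ff_p)}$ by Proposition~\ref{prop:mod p invariants}. Given any $u\in H^{>0}(BPU(p);\Ff_p)$, choose $v\in I$ with $Bi_\Gamma^*(u+v)=0$. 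Lemma~\ref{lem:Bockstein} then forces $u+v$ to be a permanent cycle in the Bockstein spectral sequence, hence the mod $p$ reduction of an integral class; applying $B\pi^*$ and Theorem~\ref{thm:C-G} gives $B\pi^*(u+v)\in\rho(K_p)\subset L_p$, and since $B\pi^*(v)\in L_p$ already, $B\pi^*(u)\in L_p$.

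The conceptual difference is that the paper trades the spectral-sequence problem (which classes in $\ker\nabla_p$ are permanent cycles?) for the Bockstein lemma plus the detection theorem of Vavpeti\v c--Viruel: anything in $\ker Bi_\Gamma^*$ is automatically the reduction of an integral class, and the integral image of $B\pi^*$ is already known to be $K_p$. This is both shorter and more robust than chasing $d_{2p+1},d_{2p+2},\dots$ individually.
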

	\begin{proof}
	Let ${_p}E_*^{*,*}$ be the Serre spectral sequence associated to \eqref{eq:fibration} with coefficients in $\Ff_p$. Applying Proposition \ref{prop:d_3} and \eqref{eq:nabla} to ${_p}E_*^{*,*}$, we see that $c_1\in {_p}E_4^{0,2}={_p}E_\infty^{0,2}$ (the equality comes from degree reasons), and so $c_1\in \im B\pi^*={_p}E_\infty^{0,*}$. Hence $L_p\subset\im B\pi^*$, using Theorem \ref{thm:C-G}.
	
	It remains to prove the opposite inclusion. 
	Let $x_2,x_3,x_{2p^2-2p}$ be as in the proof of Lemma \ref{lem:image i_gamma}, and let $x_{2p+1}=P^1(x_3)$, $x_{2p+2}=\beta(x_{2p+1})$. Then we have the following equations for mod $p$ cohomology:
	\begin{gather}\label{eq:Bpi}
		\begin{gathered}
		B\pi^*(x_2)=c_1,\ \ B\pi^*(x_{2p^2-2p})=\delta,\\ B\pi^*(x_3)=B\pi^*(x_{2p+1})=B\pi^*(x_{2p+2})=0,
		\end{gathered}
	\end{gather}
	\begin{gather}\label{eq:Bgamma}
	\begin{gathered}
		Bi^*_\Gamma(x_2)=y,\ \ Bi^*_\Gamma(x_3)=s,\ \ Bi^*_\Gamma(x_{2p+1})=z,\\ Bi^*_\Gamma(x_{2p+2})=f,\ \ Bi^*_\Gamma(x_{2p^2-2p})=-h.
	\end{gathered}
    \end{gather}
    (Remark: from the above discussion we only have $B\pi^*(x_2)=kc_1$ for some nonzero constant $k\in\Ff_p$. But $k$ is actually $1$, which  comes from the fact that $d_3(c_1)=px$ in $E_*^{*,*}$ by Proposition \ref{prop:d_3}, and the fact that $\beta(x_2)=x_3=\chi^*(x)$.)
	
  Let $I$ be the ideal of $H^*(BPU(n);\Ff_p)$ generated by the above $x_i$'s. Then
  $B\pi^*(I)\subset \Ff_p[c_1,\delta]\subset L_p$ by \eqref{eq:Bpi}. On the other hand, $Bi_\Gamma^*(I)=H^{>0}(B\Gamma;\Ff_p)^{SL_2(\Ff_p)}$ by Proposition \ref{prop:mod p invariants} and \eqref{eq:Bgamma}. Hence, for any $u\in H^{>0}(BPU(p);\Ff_p)$, there exists $v\in I$ such that $Bi^*_\Gamma(u+v)=0$, and then $B\pi^*(u+v)\in \rho(K_p)\subset L_p$ by Lemma \ref{lem:Bockstein} and Theorem \ref{thm:C-G}. It follows that $B\pi^*(u)\in L_p$ since $B\pi^*(v)\in L_p$. 
  \end{proof}

  \begin{lem}\label{lem:kernel J_p}
  	Let $J_p$ be the kernel of the map $Bi_\Gamma^*$ on $H^*(BPU(p);\Ff_p)$. Then $B\pi^*(J_p)=\rho(I_p)$ in $H^*(BU(p);\Ff_p)$.
  \end{lem}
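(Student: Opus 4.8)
The plan is to prove the two inclusions $B\pi^*(J_p)\subseteq\rho(I_p)$ and $\rho(I_p)\subseteq B\pi^*(J_p)$ separately. First I would fix notation: let $\KK=\ker\bigl(B\pi^*\colon H^*(BPU(p);\Ff_p)\to H^*(BU(p);\Ff_p)\bigr)$, let $\bar\Theta_p\colon H^*(BU(p);\Ff_p)\to H^*(B\Ff_p\{\tau\};\Ff_p)$ be the mod $p$ reduction of $\Theta_p$, and let $r\colon H^*(B\Gamma;\Ff_p)\to H^*(B\Ff_p\{\tau\};\Ff_p)$ denote restriction along $\Ff_p\{\tau\}\subset\Gamma$. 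The mod $p$ analogue of the commutative square in the proof of Lemma~\ref{lem:image i_gamma} gives, on $H^*(BPU(p);\Ff_p)$, the identity $\bar\Theta_p\circ B\pi^*=r\circ Bi_\Gamma^*$. From the formulas of Proposition~\ref{prop:mod p invariants} one checks that $r$ kills $f,s,y,z$ and sends $h$ to $\eta^{p^2-p}\neq0$; hence on $\im Bi_\Gamma^*=H^*(B\Gamma;\Ff_p)^{SL_2(\Ff_p)}$ (Lemma~\ref{lem:image i_gamma}) the kernel of $r$ is the ideal $N=(f,s,y,z)$, and $\im Bi_\Gamma^*=\Ff_p[h]\oplus N$ as graded $\Ff_p$-modules.

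For the first inclusion, let $u\in J_p$. Since $Bi_\Gamma^*(u)=0$, the argument in the proof of Lemma~\ref{lem:image pi} (using Lemma~\ref{lem:Bockstein} and Theorem~\ref{thm:C-G}) gives $B\pi^*(u)\in\rho(K_p)$; and $\bar\Theta_p(B\pi^*(u))=r(Bi_\Gamma^*(u))=0$, so $B\pi^*(u)\in\rho(K_p)\cap\ker\bar\Theta_p$. Since the mod $p$ reduction $H^{2k}(B\Ff_p\{\tau\};\Zz)\to H^{2k}(B\Ff_p\{\tau\};\Ff_p)$ is injective for $k\ge1$ (and an isomorphism for $k=0$) and $I_p=\ker(\Theta_p|_{K_p})$, an elementary check gives $\rho(K_p)\cap\ker\bar\Theta_p=\rho(I_p)$, so $B\pi^*(u)\in\rho(I_p)$.

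The second inclusion is the heart of the matter. Given $w\in\rho(I_p)$, write $w=\rho(c)$ with $c\in I_p\subseteq K_p=\im B\pi^*$ (Theorem~\ref{thm:C-G}), choose $\hat u\in H^*(BPU(p))$ with $B\pi^*(\hat u)=c$, and set $u_0=\rho(\hat u)$, so $B\pi^*(u_0)=w$. The obstruction to $u_0\in J_p$ is $\theta_0:=Bi_\Gamma^*(u_0)$; the point is to correct it away inside $\KK$. Two facts pin $\theta_0$ down. First, $\theta_0=\rho\bigl(Bi_\Gamma^*(\hat u)\bigr)$ lies in the subring of $H^*(B\Gamma;\Ff_p)$ generated by $s,f,h$ (by Corollary~\ref{cor:vistoli integral}, since $\hat u$ is integral), and because $s^2=0$ and $\Ff_p[f,h]$ is a polynomial subalgebra (Proposition~\ref{prop:mod p invariants}), that subring is $\Ff_p[f,h]\oplus\Ff_p[f,h]\,s$; so $\theta_0$ has no $y$- or $z$-component. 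Second, $r(\theta_0)=\bar\Theta_p(B\pi^*(u_0))=\bar\Theta_p(w)=\rho(\Theta_p(c))=0$ since $c\in I_p$, so $\theta_0\in N$. Intersecting the two constraints, $\theta_0\in\bigl(\Ff_p[f,h]\oplus\Ff_p[f,h]s\bigr)\cap N=f\,\Ff_p[f,h]\oplus\Ff_p[f,h]\,s$, which is contained in the smaller ideal $N':=(f,s,z)$ of $\im Bi_\Gamma^*$. By \eqref{eq:Bpi} and \eqref{eq:Bgamma}, $x_3,x_{2p+1},x_{2p+2}\in\KK$ and $Bi_\Gamma^*$ sends them to $s,z,f$; since $Bi_\Gamma^*$ is surjective onto $\im Bi_\Gamma^*$, it follows that $N'\subseteq Bi_\Gamma^*(\KK)$. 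Choosing $v\in\KK$ with $Bi_\Gamma^*(v)=\theta_0$, the element $u:=u_0-v$ satisfies $Bi_\Gamma^*(u)=0$ and $B\pi^*(u)=w$, i.e.\ $u\in J_p$ and $w=B\pi^*(u)\in B\pi^*(J_p)$.

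I expect the main obstacle to be exactly this last step, and within it the appeal to integrality: the naive correction only shows $\theta_0\in N=(f,s,y,z)$, and the generator $y$ is genuinely \emph{not} in $Bi_\Gamma^*(\KK)$ — indeed $H^2(BPU(p);\Ff_p)=\Ff_p\{x_2\}$ with $B\pi^*(x_2)=c_1\neq0$, so $\KK$ contains no class restricting to $y$ on $B\Gamma$. Only after using that $\theta_0$ is the mod $p$ reduction of an integral $SL_2(\Ff_p)$-invariant of $B\Gamma$ (Corollary~\ref{cor:vistoli integral}) does one land in the ideal $N'=(f,s,z)$, where the correction can be performed. The other ingredients — the Bockstein lifting, the commuting restriction square, and the identification $\rho(K_p)\cap\ker\bar\Theta_p=\rho(I_p)$ — are routine.
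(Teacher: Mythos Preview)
Your proof is correct and follows essentially the same route as the paper's: for both inclusions you use Lemma~\ref{lem:Bockstein} to land in $\rho(K_p)$, the commuting restriction square through $B\Ff_p\{\tau\}$ to cut down to $\rho(I_p)$, and for the reverse inclusion you lift to integral cohomology, invoke Corollary~\ref{cor:vistoli integral} to see that the obstruction $\theta_0$ lies in the subring generated by $s,f,h$, kill the $h$-part via $r(\theta_0)=0$, and then correct inside $\ker B\pi^*$ using $x_3,x_{2p+2}$ (you also allow $x_{2p+1}$, which is harmless but unnecessary since $\theta_0$ has no $z$-component, as you yourself observe). One small slip: the mod $p$ reduction $H^0(B\Ff_p\{\tau\};\Zz)\to H^0(B\Ff_p\{\tau\};\Ff_p)$ is $\Zz\to\Ff_p$, not an isomorphism; but you only need injectivity in positive even degrees, which holds, so this does not affect the argument.
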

  \begin{proof}
  	By Lemma \ref{lem:Bockstein}, $J_p\subset \rho(H^*(BPU(p)))$, hence 
   $B\pi^*(J_p)\subset\rho(K_p)$ by Theorem \ref{thm:C-G}. It is clear that $J_p$ maps to zero in $H^*(B\Ff_p\{\tau\};\Ff_p)$, so we have the inclusion $B\pi^*(J_p)\subset \rho(I_p)$. 
   
   For the opposite inclusion, let $v$ be an arbitrary element of $\rho(I_p)$. Since $\rho(I_p)\subset \rho(K_p)=\rho B\pi^*(H^*(BPU(p))$, there exists $u\in \rho(H^*(BPU(p)))$ such that $ B\pi^*(u)=v$. By Corollary \ref{cor:vistoli integral}, $Bi_\Gamma^*(u)\in \langle s,f,h\rangle$. But $u$ maps to zero in $H^*(B\Ff_p\{\tau\};\Ff_p)$, so $Bi_\Gamma^*(u)$ actually lies in $\langle s,f\rangle$. Let $x_3$, $x_{2p+2}$ be the elements defined in the proof of Lemma \ref{lem:image pi}. Then by \eqref{eq:Bgamma}, there exists $u'\in\langle x_3,x_{2p+2}\rangle$ such that $Bi_\Gamma^*(u+u')=0$, i.e., $u+u'\in J_p$. But since $B\pi^*(x_3)=B\pi^*(x_{2p+2})=0$ by \eqref{eq:Bpi}, we have $B\pi^*(u+u')=v$, and the inclusion $\rho(I_p)\subset B\pi^*(J_p)$ follows.
  \end{proof}
  
  \begin{prop}\label{prop:image}
  	The image of $H^*(BPU(p);\Ff_p)$ in $H^*(BU(p);\Ff_p)\times H^*(B\Gamma;\Ff_p)$ is the subring $R$ generated by $(\rho(I_p),0)$ and the following elements:
  	\[(0,s),\ (0,z),\ (0,f),\ (c_1,y),\ (\delta,-h).\]
  \end{prop}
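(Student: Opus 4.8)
The plan is to pin down the image directly, combining the control on the two restriction maps established above. Since $\phi:=B\pi^*\times Bi_\Gamma^*$ is an injective ring homomorphism (Corollary~\ref{cor:V-V}), its image is a subring of $H^*(BU(p);\Ff_p)\times H^*(B\Gamma;\Ff_p)$, so it suffices to prove $\im\phi=R$. Throughout I use the classes $x_2,x_3,x_{2p+1},x_{2p+2},x_{2p^2-2p}\in H^*(BPU(p);\Ff_p)$ together with the values of $B\pi^*$ and $Bi_\Gamma^*$ on them recorded in \eqref{eq:Bpi} and \eqref{eq:Bgamma}.

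First I would verify $R\subseteq\im\phi$. By \eqref{eq:Bpi} and \eqref{eq:Bgamma}, the five elements $(0,s)$, $(0,z)$, $(0,f)$, $(c_1,y)$, $(\delta,-h)$ are, respectively, $\phi(x_3)$, $\phi(x_{2p+1})$, $\phi(x_{2p+2})$, $\phi(x_2)$, $\phi(x_{2p^2-2p})$. For an element $(v,0)$ with $v\in\rho(I_p)$, Lemma~\ref{lem:kernel J_p} supplies $u\in J_p=\ker Bi_\Gamma^*$ with $B\pi^*(u)=v$, and then $\phi(u)=(v,0)$. Since $\im\phi$ is a subring containing all the stated generators of $R$, it contains $R$.

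For the reverse inclusion I would fix an arbitrary $u\in H^*(BPU(p);\Ff_p)$ and split off its $\Gamma$-component. By Lemma~\ref{lem:image i_gamma}, $Bi_\Gamma^*(u)$ lies in $H^*(B\Gamma;\Ff_p)^{SL_2(\Ff_p)}$, which by Proposition~\ref{prop:mod p invariants} is generated as a ring by $s,y,z,f,h$; choose a polynomial $P$ with $Bi_\Gamma^*(u)=P(s,y,z,f,h)$ and set $w:=P(x_3,x_2,x_{2p+1},x_{2p+2},-x_{2p^2-2p})\in H^*(BPU(p);\Ff_p)$. Then \eqref{eq:Bgamma} gives $Bi_\Gamma^*(w)=P(s,y,z,f,h)=Bi_\Gamma^*(u)$, so $u-w\in J_p$; hence $B\pi^*(u-w)\in\rho(I_p)$ by Lemma~\ref{lem:kernel J_p}, and $\phi(u-w)=(B\pi^*(u-w),0)\in R$. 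On the other hand, since $\phi$ is a ring map, $\phi(w)=P\bigl((0,s),(c_1,y),(0,z),(0,f),-(\delta,-h)\bigr)$, a polynomial in the generators of $R$, hence $\phi(w)\in R$. Adding, $\phi(u)=\phi(u-w)+\phi(w)\in R$, which gives $\im\phi\subseteq R$ and finishes the argument.

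This proof is mostly a bookkeeping assembly of Corollary~\ref{cor:V-V}, Lemma~\ref{lem:image i_gamma}, Lemma~\ref{lem:kernel J_p} and Proposition~\ref{prop:mod p invariants}, so I do not anticipate a genuine obstacle here; the real work has already been done in those statements. If there is a point that must be handled with care, it is the reduction step $u\mapsto u-w$: it works precisely because $H^*(BPU(p);\Ff_p)$ contains lifts of all the $\Ff_p$-algebra generators of $H^*(B\Gamma;\Ff_p)^{SL_2(\Ff_p)}$ (so the $\Gamma$-component of $u$ can be matched exactly), and because the image under $B\pi^*$ of $\ker Bi_\Gamma^*$ has been pinned down to be exactly $\rho(I_p)$. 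Granting these two facts, the description of $R$ asserted in the proposition is forced.
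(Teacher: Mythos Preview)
Your proof is correct and follows essentially the same approach as the paper: establish $R\subseteq\im\phi$ from \eqref{eq:Bpi}, \eqref{eq:Bgamma} and Lemma~\ref{lem:kernel J_p}, then for the reverse inclusion match the $\Gamma$-component using Proposition~\ref{prop:mod p invariants} and identify the leftover first component via Lemma~\ref{lem:kernel J_p}. The only cosmetic difference is that the paper phrases the second step in terms of pairs in the product ring (choosing $(u',v)\in R'$ with the same second coordinate), whereas you lift explicitly to an element $w\in H^*(BPU(p);\Ff_p)$ via the $x_i$'s; these are the same argument.
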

  \begin{proof}
  	By \eqref{eq:Bpi}, \eqref{eq:Bgamma} and Lemma \ref{lem:kernel J_p}, $R\subset \im(B\pi^*\times Bi_\Gamma^*)$. On the other hand, let $R'\subset R$ be the subring generated by $(0,s)$, $(0,z)$, $(0,f)$, $(c_1,y)$, $(\delta,-h)$. Then for any $(u,v)\in \im(B\pi^*\times Bi_\Gamma^*)$, there exists $(u',v)\in R'$ by Proposition \ref{prop:mod p invariants}, so that $u-u'\in B\pi^*(J_p)$. Hence by Lemma \ref{lem:kernel J_p}, $u'-u\in \rho(I_p)$, i.e., $(u-u',0)\in R$. This implies that $(u,v)\in R$, and the  opposite inclusion $ \im(B\pi^*\times Bi_\Gamma^*)\subset R$ follows.
  \end{proof}
  By Corollary \ref{cor:V-V} and Proposition \ref{prop:image}, we will prove Theorem \ref{thm:main} by constructing an isomorphism between $R=\im(B\pi^*\times Bi_\Gamma^*)$ and the quotient ring on the right side of the isomorphism in Theorem \ref{thm:main}.
  \begin{proof}[Proof of Theorem \ref{thm:main}]
	Let $L_p,Q_p,A_p$ be as in Theorem \ref{thm:main}. Define a homomorphism $\Phi:L_p\otimes Q_p\to R$ by 
	\begin{gather*}
		x_{3}\mapsto (0,s),\ x_{2p+1}\mapsto(0,z),\ x_{2p+2}\mapsto(0,f),\\
		c_1\mapsto(c_1,y),\ \delta\mapsto(\delta,-h),\ u\mapsto(u,0),\ u\in \rho(I_p).
	\end{gather*}
	Clearly, $\Phi$ is a well-defined surjection by Proposition \ref{prop:image}.
	Let 
	\[I=\langle I_p A_p,\,c_1x_3,\,c_1x_{2p+1},\,c_1x_{2p+2}+x_3x_{2p+1}\rangle\]
	 be the ideal of relations in Theorem \ref{thm:main}. We need to show that $\ker\Phi=I$. The inclusion $I\subset \ker\Phi$ is obvious. Suppose that $\Phi(e)=0$ for an element $e\in L_p\otimes Q_p$. Since the projection of $R$ to the second factor has image $H^*(B\Gamma;\Ff_p)^{SL_2(\Ff_p)}$ by Lemma \ref{lem:image i_gamma}, it follows from Proposition \ref{prop:mod p invariants} that 
	 \[e\in I':=\langle I_p,\,c_1x_3,\,c_1x_{2p+1},\,c_1x_{2p+2}+x_3x_{2p+1}\rangle.\]
	 Moreover, by Lemma \ref{lem:image pi} the projection of $R$ to the first factor has image $L_p$, hence $e\in L_p\otimes A_p$, then $e\in I=I'\cap(L_p\otimes A_p)$.
  \end{proof}
	
		\section{A simple proof of Theorem \ref{thm:vistoli}}\label{sec:proof of vistoli thm}
		The argument is very similar to the proof of Theorem \ref{thm:main}. First we have an analog of Proposition \ref{prop:image}.
	\begin{prop}\label{prop:integral image}
		The image of $H^*(BPU(p))$ in $H^*(BU(p))\times H^*(B\Gamma)$ is the subring $R_0$ generated by $(I_p,0)$ and the elements $(0,s)$, $(0,f)$, $(\delta,-h)$.	
		\end{prop}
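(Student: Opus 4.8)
The plan is to transcribe the proof of Proposition~\ref{prop:image} into the integral setting, replacing the mod $p$ inputs of Section~\ref{sec:proof of main thm} by their integral counterparts. Recall that $B\pi^*\times Bi_\Gamma^*$ is injective on $H^*(BPU(p))$ by Corollary~\ref{cor:integral injection}, so it suffices to identify the image. Two facts must be re-established. \emph{Fact (i)} (the integral analogue of Lemma~\ref{lem:image i_gamma}): the image of $Bi_\Gamma^*\colon H^*(BPU(p))\to H^*(B\Gamma)$ equals $H^*(B\Gamma)^{SL_2(\Ff_p)}$, which by Corollary~\ref{cor:vistoli integral} is the subring generated by $s,f,h$. \emph{Fact (ii)} (the integral analogue of Lemma~\ref{lem:kernel J_p}): if $J_p^{\Zz}$ denotes the kernel of $Bi_\Gamma^*$ on $H^*(BPU(p))$, then $B\pi^*(J_p^{\Zz})=I_p$. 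Granting these, Proposition~\ref{prop:integral image} follows by the word-for-word analogue of the proof of Proposition~\ref{prop:image}: one checks $R_0\subseteq\im(B\pi^*\times Bi_\Gamma^*)$ by hitting each generator $(0,s),(0,f),(\delta,-h),(I_p,0)$, and the reverse inclusion by the same diagram chase --- given $(u,w)$ in the image, Fact (i) produces $(u',w)$ in the subring generated by $(0,s),(0,f),(\delta,-h)$, so $(u-u',0)$ lies in the image, and then Fact (ii) gives $u-u'\in I_p$, whence $(u,w)\in R_0$.

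For Fact (i) I would exhibit integral classes restricting to $s,f,h$. Put $x_3:=\chi^*(x)\in H^3(BPU(p))$; since $\chi\circ B\pi$ is nullhomotopic in~\eqref{eq:fibration}, $B\pi^*(x_3)=0$, while $Bi_\Gamma^*(x_3)$ generates $H^3(B\Gamma)^{SL_2(\Ff_p)}=\Zz/p\{s\}$, so, rescaling $s$ if necessary as in Lemma~\ref{lem:image i_gamma}, $Bi_\Gamma^*(x_3)=s$. Next let $x_{2p+1}=P^1(\rho(x_3))$ and let $\tilde x_{2p+2}\in H^{2p+2}(BPU(p))$ be its integral Bockstein; it is $p$-torsion, hence $B\pi^*(\tilde x_{2p+2})$ is a torsion element of the torsion-free ring $H^*(BU(p))$ and therefore vanishes, whereas $\rho(\tilde x_{2p+2})=\beta(x_{2p+1})$ is the class $x_{2p+2}$ from the proof of Lemma~\ref{lem:image pi}, for which $Bi_\Gamma^*(x_{2p+2})=f$ by~\eqref{eq:Bgamma}; thus $\rho\bigl(Bi_\Gamma^*(\tilde x_{2p+2})\bigr)=f$. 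A degree count in $H^*(B\Gamma)^{SL_2(\Ff_p)}=\langle s,f,h\rangle$ (using $s^2=0$) shows $H^{2p+2}(B\Gamma)^{SL_2(\Ff_p)}=\Zz/p\{f\}$, on which $\rho$ is injective, so $Bi_\Gamma^*(\tilde x_{2p+2})=f$. Finally, since $\delta\in K_p=\im B\pi^*$ by Theorem~\ref{thm:C-G}, choose $x_{2p^2-2p}\in H^{2p^2-2p}(BPU(p))$ with $B\pi^*(x_{2p^2-2p})=\delta$; its image in $H^*(B\Ff_p\{\tau\})$ is $\Theta_p(\delta)=-\eta^{p^2-p}\neq0$ by~\eqref{eq:delta}, and this restriction factors through $Bi_\Gamma^*$, so, since $H^{2p^2-2p}(B\Gamma)^{SL_2(\Ff_p)}=\Zz/p\{h\}$ with $h$ restricting to $\eta^{p^2-p}$, we get $Bi_\Gamma^*(x_{2p^2-2p})=-h$, exactly as in Lemma~\ref{lem:image i_gamma}.

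For Fact (ii), the inclusion $B\pi^*(J_p^{\Zz})\subseteq I_p$ holds because every element of $J_p^{\Zz}$ maps to $0$ in $H^*(B\Ff_p\{\tau\})$ (this restriction factors through $B\Gamma$) while $B\pi^*$ lands in $K_p$ by Theorem~\ref{thm:C-G}. Conversely, given $v\in I_p\subseteq K_p=\im B\pi^*$ and a preimage $u$, the element $Bi_\Gamma^*(u)$ lies in $H^*(B\Gamma)^{SL_2(\Ff_p)}=\langle s,f,h\rangle$ and restricts to $0$ in $H^*(B\Ff_p\{\tau\})$; since the restriction $\langle s,f,h\rangle\to\Zz[\eta]/(p\eta)$ kills $s$ and $f$ and sends the powers of $h$ to distinct nonzero elements, its kernel is the ideal $\langle s,f\rangle$, hence $Bi_\Gamma^*(u)\in\langle s,f\rangle$. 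Using Fact (i), one corrects $u$ by an element $u'$ of the ideal of $H^*(BPU(p))$ generated by $x_3$ and $\tilde x_{2p+2}$ so that $u+u'\in J_p^{\Zz}$; since $B\pi^*(x_3)=B\pi^*(\tilde x_{2p+2})=0$ one still has $B\pi^*(u+u')=v$, giving $v\in B\pi^*(J_p^{\Zz})$. This is the verbatim analogue of the proof of Lemma~\ref{lem:kernel J_p}.

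I expect the main obstacle to be Fact (i): producing an integral class that restricts \emph{exactly} to $f$ (not merely to a nonzero multiple), which hinges both on the vanishing of its restriction to $BU(p)$ and on the one-dimensionality of $H^{2p+2}(B\Gamma)^{SL_2(\Ff_p)}$, together with the closely related identification of the kernel of $H^*(B\Gamma)^{SL_2(\Ff_p)}\to H^*(B\Ff_p\{\tau\})$ with the ideal $\langle s,f\rangle$. Everything else is a routine transcription of Section~\ref{sec:proof of main thm}.
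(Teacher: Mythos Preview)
Your proposal is correct and follows precisely the approach of the paper, which merely writes ``Similar to Lemma~\ref{lem:kernel J_p}, one can show that $B\pi^*(J_p)=I_p$'' and ``since $(0,s),(0,f),(\delta,-h)\in\im(B\pi^*\times Bi_\Gamma^*)$ as we have seen''. You have filled in the details the paper elides; in particular, your explicit construction of the integral class $\tilde x_{2p+2}$ via the integral Bockstein, together with the check that $H^{2p+2}(B\Gamma)^{SL_2(\Ff_p)}=\Zz/p\{f\}$ so that $\rho$ detects it, is exactly the missing verification behind the paper's ``as we have seen'' for $(0,f)$.
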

		\begin{proof}
			Let $B\pi^*$ and $Bi_\Gamma^*$ be the restriction map on $H^*(BPU(p))$, and let $J_p=\ker Bi_\Gamma^*$. Similar to Lemma \ref{lem:kernel J_p}, one can show that $B\pi^*(J_p)=I_p$. Hence $R_0\subset \im(B\pi^*\times Bi_\Gamma^*)$, since $(0,s)$, $(0,f)$, $(\delta,-h)\in \im(B\pi^*\times Bi_\Gamma^*)$ as we have seen.
			Using Corollary \ref{cor:vistoli integral} and arguing as in the proof of Proposition \ref{prop:image}, we get the opposite inclusion.  
		\end{proof}
		
		Finally, one can imitate the proof of Theorem \ref{thm:main} to finish the proof of Theorem \ref{thm:vistoli}, using Corollary \ref{cor:integral injection} and constructing an isomorphism between $R_0$ and the quotient ring on the right side of the isomorphism in Theorem \ref{thm:vistoli}.
		
		\section*{Acknowledgment}
		The author thanks the anonymous referee for constructive and helpful suggestions.

	\bibliography{M-A}

\providecommand{\bysame}{\leavevmode\hbox to3em{\hrulefill}\thinspace}
\providecommand{\MR}{\relax\ifhmode\unskip\space\fi MR }
\providecommand{\MRhref}[2]{%
  \href{http://www.ams.org/mathscinet-getitem?mr=#1}{#2}
}
\providecommand{\href}[2]{#2}
\begin{thebibliography}{10}

\bibitem{AGMV08}
K.~K.~S. Andersen, J.~Grodal, J.~M. M{\o}ller, and A.~Viruel, \emph{The
  classification of $p$-compact groups for $p$ odd}, Ann. of Math. (2)
  \textbf{167} (2008), no.~1, 95--210.

\bibitem{AW14a}
B.~Antieau and B.~Williams, \emph{The period-index problem for twisted
  topological {$K$}-theory}, Geom. Topol. \textbf{18} (2014), no.~2,
  1115--1148.

\bibitem{AW14b}
\bysame, \emph{The topological period-index problem over 6-complexes}, J.
  Topol. \textbf{7} (2014), no.~3, 617--640.

\bibitem{BG75}
J.~C. Becker and D.~H. Gottlieb, \emph{The transfer map and fiber bundles},
  Topology \textbf{14} (1975), 1--12.

\bibitem{Borel53}
A.~Borel, \emph{Sur la cohomologie des espaces fibr\'es principaux et des
  espaces homog\`enes de groupes de {L}ie compacts}, Ann. of Math. (2)
  \textbf{57} (1953), 115--207.

\bibitem{Borel55}
\bysame, \emph{Topology of {L}ie groups and characteristic classes}, Bull.
  Amer. Math. Soc. \textbf{61} (1955), 397--432.

\bibitem{CG24}
W.~Chen and X.~Gu, \emph{Topological complexity of enumerative problems and
  classifying spaces of {$PU_n$}}, arXiv:2411.00497, 2024.

\bibitem{CG21}
D.~Crowley and X.~Gu, \emph{On {$H^*(BPU_n;\Zz)$} and {W}eyl group invariants},
  arXiv:2103.03523, to appear in Trans. Amer. Math. Soc., 2021.

\bibitem{Dickson11}
L.~E. Dickson, \emph{A fundamental system of invariants of the general modular
  linear group with a solution of the form problem}, Trans. Amer. Math. Soc.
  \textbf{12} (1911), no.~1, 75--98.

\bibitem{Fan24a}
F.~Fan, \emph{The cohomology of the classifying space of ${PU}(4)$},
  arXiv:2405.08256, to appear in J. Topol. Anal., 2024.

\bibitem{Fan24b}
\bysame, \emph{Operators on symmetric polynomials and applications in computing
  the cohomology of {$BPU_n$}}, arXiv:2410.11691, to appear in Acta Math. Sin.
  (Engl. Ser.), 2024.

\bibitem{FZZZ24}
F.~Fan, J.~Zha, Z.~Zhang, and L.~Zhong, \emph{Cohomology of {$BPU_n$} and rings
  of invariants of {W}eyl groups}, arXiv:2407.16297, to appear in Homology
  Homotopy Appl., 2024.

\bibitem{Gro66}
A.~Grothendieck, \emph{Le groupe de {B}rauer. {I}. {A}lg\`ebres d'{A}zumaya et
  interpr\'etations diverses}, S\'eminaire {B}ourbaki, {V}ol.\ 9, Soc. Math.
  France, Paris, 1995, pp.~Exp. No. 290, 199--219.

\bibitem{Gu19}
X.~Gu, \emph{The topological period-index problem over 8-complexes, {I}}, J.
  Topol. \textbf{12} (2019), no.~4, 1368--1395.

\bibitem{Gu20}
\bysame, \emph{The topological period-index problem over 8-complexes, {II}},
  Proc. Amer. Math. Soc. \textbf{148} (2020), no.~10, 4531--4545.

\bibitem{Gu21}
\bysame, \emph{On the cohomology of the classifying spaces of projective
  unitary groups}, J. Topol. Anal. \textbf{13} (2021), no.~2, 535--573.

\bibitem{KM07}
M.~Kameko and M.~Mimura, \emph{M\`ui invariants and {M}ilnor operations}, Geom.
  Topol. Monogr., vol.~11, Geom. Topol. Publ., Coventry, 2007, pp.~107--140.

\bibitem{KY08}
M.~Kameko and N.~Yagita, \emph{The {B}rown-{P}eterson cohomology of the
  classifying spaces of the projective unitary groups {${\rm PU}(p)$} and
  exceptional {L}ie groups}, Trans. Amer. Math. Soc. \textbf{360} (2008),
  no.~5, 2265--2284.

\bibitem{KMS75}
A.~Kono, M.~Mimura, and N.~Shimada, \emph{Cohomology of classifying spaces of
  certain associative {$H$}-spaces}, J. Math. Kyoto Univ. \textbf{15} (1975),
  no.~3, 607--617.

\bibitem{KY93}
A.~Kono and A.~Yagita, \emph{Brown-{P}eterson and ordinary cohomology theories
  of classifying spaces for compact {L}ie groups}, Trans. Amer. Math. Soc.
  \textbf{339} (1993), no.~2, 781--798.

\bibitem{Mui75}
H.~M\`ui, \emph{Modular invariant theory and the cohomology algebras of
  symmetric group}, J. Fac. Sci. Univ. Tokyo Sect. IA Math. \textbf{22} (1975),
  no.~3, 319--369.

\bibitem{Tod87}
H.~Toda, \emph{Cohomology of classifying spaces}, Homotopy theory and related
  topics ({K}yoto, 1984), Adv. Stud. Pure Math., vol.~9, North-Holland,
  Amsterdam, 1987, pp.~75--108.

\bibitem{VV05}
A.~Vavpeti\v{c} and A.~Viruel, \emph{On the mod {$p$} cohomology of {$B{\rm
  PU}(p)$}}, Trans. Amer. Math. Soc. \textbf{357} (2005), no.~11, 4517--4532.

\bibitem{Vezz00}
G.~Vezzosi, \emph{On the {C}how ring of the classifying stack of {${\rm
  PGL}_{3,\bold C}$}}, J. Reine Angew. Math. \textbf{523} (2000), 1--54.

\bibitem{Vis07}
A.~Vistoli, \emph{On the cohomology and the {C}how ring of the classifying
  space of {${\rm PGL}_p$}}, J. Reine Angew. Math. \textbf{610} (2007),
  181--227.

\end{thebibliography}
	\bibliographystyle{amsplain}
\end{document}